\newcommand{\vb}{\mathbf{b}}
\newcommand{\ve}{\mathbf{e}}
\newcommand{\vr}{\mathbf{r}}
\newcommand{\vu}{\mathbf{u}}
\newcommand{\vv}{\mathbf{v}}
\newcommand{\vw}{\mathbf{w}}
\newcommand{\vx}{\mathbf{x}}
\newcommand{\vy}{\mathbf{y}}
\newcommand{\vz}{\mathbf{z}}
\newcommand{\mA}{\mathbf{A}}
\newcommand{\mH}{\mathbf{H}}
\newcommand{\mI}{\mathbf{I}}
\newcommand{\mM}{\mathbf{M}}
\newcommand{\mQ}{\mathbf{Q}}
\newcommand{\mS}{\mathbf{S}}
\newcommand{\mU}{\mathbf{U}}
\newcommand{\mV}{\mathbf{V}}
\newcommand{\mW}{\mathbf{W}}
\newcommand{\mX}{\mathbf{X}}
\newcommand{\mY}{\mathbf{Y}}
\newcommand{\mZ}{\mathbf{Z}}
\newcommand{\tN}{\mathcal{N}}
\newcommand{\vepsilon}{\bm{\epsilon}}
\newcommand{\mLambda}{\bm{\Lambda}}
\newcommand{\mSigma}{\bm{\Sigma}}
\newcommand{\Romannumber}[1]{\uppercase\expandafter{\romannumeral #1}}
\newcommand{\real}{\mathbb{R}}
\newcommand{\sphere}{\mathbb{S}}
\newcommand{\ones}{\mathbf{1}}
\newcommand{\zeros}{\mathbf{0}}
\newcommand{\polyspace}{\mathbb{P}}
\DeclareMathOperator{\spn}{span}
\DeclareMathOperator{\range}{range}
\DeclareMathOperator{\diag}{diag}
\DeclareMathOperator{\nnz}{nz}
\DeclareMathOperator{\relu}{ReLU}
\DeclareMathOperator*{\argmin}{argmin}
\theoremstyle{definition} 
\theoremstyle{remark}     
\theoremstyle{remark}     
\theoremstyle{plain}      \newtheorem{theorem}{Theorem}
\theoremstyle{plain}      
\theoremstyle{plain}      
\theoremstyle{plain}      \newtheorem{corollary}[theorem]{Corollary}
\theoremstyle{plain}
\title{Graph Neural Preconditioners for Iterative Solutions of Sparse Linear Systems}
\author{%
  Jie Chen \\
  MIT-IBM Watson AI Lab, IBM Research \\
  \texttt{chenjie@us.ibm.com}
}
\begin{document}

\maketitle

\begin{abstract}
  Preconditioning is at the heart of iterative solutions of large, sparse linear systems of equations in scientific disciplines. Several algebraic approaches, which access no information beyond the matrix itself, are widely studied and used, but ill-conditioned matrices remain very challenging. We take a machine learning approach and propose using graph neural networks as a general-purpose preconditioner. They show attractive performance for many problems and can be used when the mainstream preconditioners perform poorly. Empirical evaluation on over 800 matrices suggests that the construction time of these graph neural preconditioners (GNPs) is more predictable and can be much shorter than that of other widely used ones, such as ILU and AMG, while the execution time is faster than using a Krylov method as the preconditioner, such as in inner-outer GMRES. GNPs have a strong potential for solving large-scale, challenging algebraic problems arising from not only partial differential equations, but also economics, statistics, graph, and optimization, to name a few.
\end{abstract}

\section{Introduction}
Iterative methods are commonly used to solve large, sparse linear systems of equations $\mA \vx = \vb$. These methods typically build a Krylov subspace, onto which the original system is projected, such that an approximate solution within the subspace is extracted. For example, GMRES~\citep{Saad1986}, one of the most popularly used methods in practice, builds an orthonormal basis $\{\vv_1, \vv_2, \ldots, \vv_m\}$ of the $m$-dimensional Krylov subspace by using the Arnoldi process and defines the approximate solution $\vx_m \in \vx_0 + \spn(\{\vv_1, \vv_2, \ldots, \vv_m\})$, such that the residual norm $\|\vb - \mA \vx_m\|_2$ is minimized.

The effectiveness of Krylov methods heavily depends on the conditioning of the matrix $\mA$. Hence, designing a good preconditioner is crucial in practice. In some cases (e.g., solving partial differential equations, PDEs), the problem structure provides additional information that aids the development of a preconditioner applicable to this problem or a class of similar problems. For example, multigrid preconditioners are particularly effective for Poisson-like problems with a mesh. In other cases, little information is known beyond the matrix itself. An example is the SuiteSparse matrix collection~\citep{Davis2011}, which contains thousands of sparse matrices for benchmarking numerical linear algebra algorithms. In these cases, a general-purpose (also called ``algebraic'') preconditioner is desirable; examples include ILU~\citep{Saad1994}, approximate inverse~\citep{Chow1998}, and algebraic multigrid (AMG)~\citep{Ruge1987}. However, it remains very challenging to design one that performs well universally.

In this work, we propose to use neural networks as a general-purpose preconditioner. Specifically, we consider preconditioned Krylov methods for solving problems in the form $\mA \mM \vu = \vb$, where $\mM \approx \mA^{-1}$ is the preconditioner, $\vu$ is the new unknown, and $\vx = \mM \vu$ is the recovered solution. We design a graph neural network (GNN)~\citep{Zhou2020,Wu2021} to assume the role of $\mM$ and develop a training method to learn $\mM$ from select $(\vb, \vx)$ pairs. This proposal is inspired by the universal approximation property of neural networks~\citep{Hornik1989} and encouraged by their widespread success in artificial intelligence~\citep{Bommasani2021}.

Because a neural network is, in nature, nonlinear, our preconditioner $\mM$ is not a linear operator anymore. We can no longer build a Krylov subspace for $\mA\mM$ when a Krylov subspace is defined for only linear operators. Hence, we focus on flexible variants of the preconditioned Krylov methods instead. Specifically, we use flexible GMRES (FGMRES)~\citep{Saad1993}, which considers $\mM$ to be different in every Arnoldi step. In this framework, all what matters is the subspace from which the approximate solution is extracted. A nonlinear operator $\mM$ can also be used to build this subspace and hence the neural preconditioner is applicable to FGMRES.

We use a \emph{graph} neural network because of the natural connection between a sparse matrix and the graph adjacency matrix, similar to how AMG interprets the coefficient matrix with a graph. Many GNN architectures access the graph structure through $\mA$-multiplications and they become polynomials of $\mA$ when the nonlinearity is omitted~\citep{Wu2019,Chen2020}. This observation bridges the connection between a GNN as an operator and the polynomial approximation of $\mA^{-1}$, the latter of which is an essential ingredient in the convergence theory of Krylov methods. Interestingly, a side benefit resulting from the access pattern of $\mA$ is that the GNN is applicable to not only sparse matrices, but also structured matrices that admit fast $\mA$-multiplications (such as hierarchical matrices~\citep{Hackbusch1999,Hackbusch2002,Chen2023}, Toeplitz matrices~\citep{Chan2007,Chen2014}, unassembled finite-element matrices~\citep{Ciarlet2002}, and Gauss-Newton matrices~\citep{Liu2023}).

There are a few advantages of the proposed graph neural preconditioner (GNP). It performs comparatively much better for ill-conditioned problems, by learning the matrix inverse from data, mitigating the restrictive modeling of the sparsity pattern (as in ILU and approximate inverse), the insufficient quality of polynomial approximation (as in using Krylov methods as the preconditioner), and the challenge of smoothing over a non-geometric mesh (as in AMG). Additionally, empirical evaluation suggests that the construction time of GNP is more predictable, inherited from the predictability of neural network training, than that of ILU and AMG; and the execution time of GNP is much shorter than GMRES as the preconditioner, which may be bottlenecked by the orthogonalization in the Arnoldi process.

\subsection{Contributions}
Our work has a few technical contributions. First, we offer a convergence analysis for FGMRES. Although this method is well known and used in practice, little theoretical work was done, in part because the tools for analyzing Krylov subspaces are not readily applicable (as the subspace is no longer Krylov and we have no isomorphism with the space of polynomials). Instead, our analysis is a posteriori, based on the computed subspace.

Second, we propose an effective approach to training the neural preconditioner; in particular, training data generation. While it is straightforward to train the neural network in an online-learning manner---through preparing streaming, random input-output pairs in the form of $(\vb,\vx)$---it leaves open the question regarding what randomness best suits the purpose. We consider the bottom eigen-subspace of $\mA$ when defining the sampling distribution and show the effectiveness of this approach.

Third, we develop a scale-equivariant GNN as the preconditioner. Because of the way that training data are generated, the GNN inputs $\vb$ can have vast different scales; meanwhile, the ground truth operator $\mA^{-1}$ is equivariant to the scaling of the inputs. Hence, to facilitate training, we design the GNN by imposing an inductive bias that obeys scale-equivariance.

Fourth, we adopt a novel evaluation protocol for general-purpose preconditioners. The common practice evaluates a preconditioner on a limited number of problems; in many occasions, these problems come from a certain type of PDEs or an application, which constitutes only a portion of the use cases. To broadly evaluate a new kind of general-purpose preconditioner and understand its strengths and weaknesses, we propose to test it on a large number of matrices commonly used by the community. To this end, we use the SuiteSparse collection and perform evaluation on a substantial portion of it (\emph{all} non-SPD, real matrices falling inside a size interval; over 800 from 50 application areas in this work). To streamline the evaluation, we define evaluation criteria, limit the tuning of hyperparameters, and study statistics over the distribution of matrices.

\subsection{Related Work}
It is important to put the proposed GNP in context. The idea of using a neural network to construct a preconditioner emerged recently. The relevant approaches learn the nonzero entries of the incomplete factors~\citep{Li2023, Haeusner2023} or their correction~\citep{Trifonov2024}, or of the approximate inverse~\citep{Baankestad2024}. In these approaches, the neural network does not directly approximate the mapping from $\vb$ to $\vx$ like ours does; rather, it is used to complete the nonzeros of the predefined sparsity structure. A drawback of these approaches is that the predefined sparsity imposes a nonzero lower bound on the approximation of $\mA^{-1}$, which (and whose factors) are generally not sparse. Moreover, although the neural network can be trained on a PDE problem (by varying the coefficients, grid sizes, or boundary conditions) and generalizes well to the same problem, it is unlikely that a single network can work well for all problems and matrices in life.

Approaches wherein the neural network directly approximates the matrix inverse are more akin to physics-informed neural networks (PINNs)~\citep{Raissi2019} and neural operators (NOs)~\citep{Rudikov2024,Kovachki2022,Li2020,Li2021,Lu2021}. However, the learning of PINNs requires a PDE whose residual forms a partial training loss, whereas NOs consider infinite-dimensional function spaces, with PDEs being the application. In our case of a general-purpose preconditioner, there is not an underlying PDE; and not every matrix problem relates to function spaces with additional properties to exploit (e.g., spatial coordinates, smoothness, and decay of the Green's function). For example, one may be interested in finding the commute times between a node and all other nodes in a graph; this problem can be solved by solving a linear system with respect the graph Laplacian matrix.
The only information we exploit is the matrix itself.

\section{Method}
Let $\mA \in \real^{n \times n}$. From now on, $\mM$ is an operator $\real^n \to \real^n$ and we write $\mM(\vv)$ to mean applying the operator on $\vv \in \real^n$. This notation includes the special case $\mM(\vv) = \mM\vv$ when $\mM$ is a matrix.

\subsection{Flexible GMRES}\label{sec:fgmres}
A standard preconditioned Krylov solver solves the linear system $\mA\mM\vu = \vb$ by viewing $\mA\mM$ as the new matrix and building a Krylov subspace, from which an approximate solution $\vu_m$ is extracted and $\vx_m$ is recovered from $\mM\vu_m$. It is important to note that a subspace, by definition, is \emph{linear}. However, applying a nonlinear operator $\mM$ on the linear vector space does not result in a vector space spanned by the mapped basis. Hence, convergence theory of $\vx_m=\mM(\vu_m)$ is broken.

We resort to flexible variants of Krylov solvers~\citep{Saad1993,Notay2000,Chen2016} and focus on flexible GMRES for simplicity. FGMRES was designed such that the matrix $\mM$ can change in each Arnoldi step; we extend it to a fixed, but nonlinear, operator $\mM$.

Algorithm~\ref{algo:fgmres} in Section~\ref{app:algo} summarizes the solver. The algorithm assumes a restart length $m$. Starting from an initial guess $\vx_0$ with residual vector $\vr_0$ and 2-norm $\beta$, FGMRES runs $m$ Arnoldi steps, resulting in the relation
\begin{equation}\label{eqn:arnoldi.precond}
\mA \mZ_m = \mV_m \mH_m + h_{m+1,m} \vv_{m+1} \ve_m^{\top} = \mV_{m+1} \overline{\mH}_m,
\end{equation}
where $\mV_m = [\vv_1, \ldots, \vv_m] \in \real^{n \times m}$ is orthonormal, $\mZ_m = [\vz_1, \ldots, \vz_m] \in \real^{n \times n}$ contains $m$ columns $\vz_j = \mM(\vv_j)$, $\mH_m = [h_{ij}] \in \real^{m \times m}$ is upper Hessenberg, and $\overline{\mH}_m = [h_{ij}] \in \real^{(m+1) \times m}$ extends $\mH_m$ with an additional row at the bottom. The approximate solution $\vx_m$ is computed in the form $\vx_m \in \vx_0 + \spn(\{\vz_1,\ldots,\vz_m\}) = \{ \vx_0 + \mZ_m \vy \}$, such that the residual norm $\| \vb - \mA \vx_m \|_2 = \| \beta \ve_1 - \overline{\mH}_m \vy \|_2$ is minimized. If the residual norm is sufficiently small, the solution is claimed; otherwise, let $\vx_m$ be the initial guess of the next Arnoldi cycle and repeat.

We provide a convergence analysis below. All proofs are in Section~\ref{app:proof}.

\begin{theorem}\label{thm:fgmres}
  Assume that FGMRES is run without restart and without breakdown. We use the tilde notation to denote the counterpart quantities when FGMRES is run with a fixed preconditioner matrix $\widetilde{\mM}$. For any $\widetilde{\mM}$ such that $\mA \widetilde{\mM}$ can be diagonalized, as in $\mX^{-1} \mA \widetilde{\mM} \mX = \mLambda = \diag(\lambda_1, \ldots, \lambda_n)$, the residual $\vr_m = \vb - \mA \vx_m$ satisfies
  \begin{equation}\label{eqn:r.bound}
    \| \vr_m \|_2 \le
    \kappa_2(\mX) \epsilon^{(m)}(\mLambda) \| \vr_0 \|_2 +
    \| \mQ_m \mQ_m^{\top} - \widetilde{\mQ}_m \widetilde{\mQ}_m^{\top} \|_2 \| \vr_0 \|_2,
  \end{equation}
  where $\kappa_2$ denotes the 2-norm condition number, $\mQ_m$ (resp.\ $\widetilde{\mQ}_m$) is the thin-Q factor of $\mA \mZ_m$ (resp.\ $\mA \widetilde{\mZ}_m$), $\polyspace_m$ denotes the space of degree-$m$ polynomials, and
  \[
  \epsilon^{(m)}(\mLambda) = \min_{p \in \polyspace_m, \, p(0)=1} \max_{i=1,\ldots,n} |p(\lambda_i)|.
  \]
\end{theorem}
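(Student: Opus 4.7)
The plan is to combine two ingredients: a compact projection formula for the FGMRES residual that does not use linearity of $\mM$, and the classical polynomial bound for right-preconditioned GMRES applied to the fictitious run with $\widetilde{\mM}$.

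First I would rewrite the residual norm as a projection. Because $\mV_{m+1}$ has orthonormal columns with $\vv_1 = \vr_0/\beta$, we have $\vr_0 = \beta \mV_{m+1}\ve_1$, and relation \eqref{eqn:arnoldi.precond} gives
\[
\|\beta\ve_1 - \overline{\mH}_m\vy\|_2 = \|\mV_{m+1}(\beta\ve_1 - \overline{\mH}_m\vy)\|_2 = \|\vr_0 - \mA\mZ_m\vy\|_2.
\]
Minimizing over $\vy\in\real^m$ therefore yields $\|\vr_m\|_2 = \|(\mI - \mQ_m\mQ_m^\top)\vr_0\|_2$, since $\mQ_m\mQ_m^\top$ is the orthogonal projector onto $\range(\mA\mZ_m)$ (the no-breakdown assumption gives $\overline{\mH}_m$ full column rank, hence $\mA\mZ_m$ does too, so the thin-Q factor exists). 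Running the same derivation verbatim on the fictitious FGMRES with $\widetilde{\mM}$ (starting from the same $\vx_0$, hence the same $\vr_0$) gives $\|\widetilde{\vr}_m\|_2 = \|(\mI - \widetilde{\mQ}_m\widetilde{\mQ}_m^\top)\vr_0\|_2$.

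Next I would apply a one-line triangle inequality. Writing $\mI - \mQ_m\mQ_m^\top = (\mI - \widetilde{\mQ}_m\widetilde{\mQ}_m^\top) + (\widetilde{\mQ}_m\widetilde{\mQ}_m^\top - \mQ_m\mQ_m^\top)$ and evaluating on $\vr_0$,
\[
\|\vr_m\|_2 \le \|\widetilde{\vr}_m\|_2 + \|\mQ_m\mQ_m^\top - \widetilde{\mQ}_m\widetilde{\mQ}_m^\top\|_2 \|\vr_0\|_2,
\]
which already produces the second term of \eqref{eqn:r.bound}. It remains to bound $\|\widetilde{\vr}_m\|_2$ by the classical polynomial argument: since $\widetilde{\mM}$ is linear, the fictitious run is right-preconditioned GMRES on $\mA\widetilde{\mM}$, and $\widetilde{\vr}_m = q(\mA\widetilde{\mM})\vr_0$ for the residual-optimal polynomial $q \in \polyspace_m$ with $q(0)=1$. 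Using $\mA\widetilde{\mM} = \mX\mLambda\mX^{-1}$,
\[
\|\widetilde{\vr}_m\|_2 \le \kappa_2(\mX) \min_{q\in\polyspace_m,\,q(0)=1}\max_i |q(\lambda_i)|\cdot\|\vr_0\|_2 = \kappa_2(\mX)\,\epsilon^{(m)}(\mLambda)\,\|\vr_0\|_2,
\]
and combining with the triangle inequality delivers \eqref{eqn:r.bound}.

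I expect the main conceptual obstacle to be not a computation but the decision of how to formulate the bound: once $\mM$ is nonlinear, $\spn(\mZ_m)$ is no longer a Krylov subspace, so the polynomial/isomorphism machinery fails on the actual run. The resolution is to treat $\spn(\mZ_m)$ as a generic $m$-dimensional subspace through its orthogonal projector $\mQ_m\mQ_m^\top$, compare this projector to the one arising from a linear reference preconditioner $\widetilde{\mM}$, and absorb the (a posteriori, computable) discrepancy into the second term. Everything after that reduction is routine.
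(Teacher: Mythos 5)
Your proposal is correct and follows essentially the same route as the paper's proof: a triangle inequality splitting $\|\vr_m\|_2$ into the classical diagonalization bound for the fictitious linear-preconditioner run plus the projector-difference term, using $\vr_m = (\mI - \mQ_m\mQ_m^{\top})\vr_0$ from the least-squares characterization. The extra detail you supply (the $\mV_{m+1}$-isometry argument and the full-rank justification under no breakdown) is a welcome elaboration of steps the paper leaves implicit, but the argument is the same.
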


Let us consider the two terms on the right-hand side of~\eqref{eqn:r.bound}. The first term $\kappa_2(\mX) \epsilon^{(m)}(\mLambda) \| \vr_0 \|_2$ is the standard convergence result of GMRES on the matrix $\mA \widetilde{\mM}$. The factor $\epsilon^{(m)}(\mLambda)$ is approximately exponential in $m$, when the eigenvalues $\lambda_i$ are located in an ellipse which excludes the origin; see~\citet[Corollary 6.33]{Saad2003}. Furthermore, when the eigenvalues are closer to each other than to the origin, the base of the exponential is close to zero, resulting in rapid convergence. Meanwhile, when $\mA \widetilde{\mM}$ is close to normal, $\kappa_2(\mX)$ is close to 1.

What $\mM$ affects is the second term $\| \mQ_m \mQ_m^{\top} - \widetilde{\mQ}_m \widetilde{\mQ}_m^{\top} \|_2 \| \vr_0 \|_2$. This term does not show convergence on the surface, but one can always find an $\widetilde{\mM}$ such that it vanishes, assuming no breakdown. When $\mM$ is close to $\mA^{-1}$, $\mA \mZ_m$ is a small perturbation of $\mV_m$ by definition. Then, \eqref{eqn:arnoldi.precond} suggests that $\overline{\mH}_m$ is close to the identity matrix for all $m$ and so is $\mH_n$.

\begin{corollary}\label{cor:fgmres}
  Assume that FGMRES is run without restart and without breakdown. On completion, let $\mH_n$ be diagonalizable, as in $\mY^{-1} \mH_n \mY = \mSigma = \diag(\sigma_1, \ldots, \sigma_n)$. Then, the residual norm satisfies
  \begin{equation}\label{eqn:r.bound2}
    \| \vr_m \|_2 \le
    \kappa_2(\mY) \epsilon^{(m)}(\mSigma) \| \vr_0 \|_2
    \qquad\text{for all $m$}.
  \end{equation}
\end{corollary}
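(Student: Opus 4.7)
The plan is to deduce the corollary from Theorem~\ref{thm:fgmres} by choosing the reference linear preconditioner as $\widetilde{\mM} = \mA^{-1} \mV_n \mH_n \mV_n^{\top}$, where $\mV_n$ and $\mH_n$ are the quantities produced by the nonlinear FGMRES run (well-defined since the no-breakdown assumption guarantees that $\mV_n$ is a full $n \times n$ orthogonal matrix). The remark following Theorem~\ref{thm:fgmres} already identifies this as the $\widetilde{\mM}$ that should drive the second term in~\eqref{eqn:r.bound} to zero; the work is to make that claim rigorous and then to read off the spectral data of $\mA\widetilde{\mM}$ so as to simplify the first term.

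First I would verify by induction on $j$ that running FGMRES with the linear operator $\widetilde{\mM}$ from the same initial guess $\vx_0$ reproduces the original Arnoldi data. The base case $\widetilde{\vv}_1 = \vv_1$ follows because $\vr_0$ does not depend on the preconditioner. For the inductive step, assuming $\widetilde{\vv}_i = \vv_i$ for $i \le j$, one has $\widetilde{\vz}_j = \widetilde{\mM}\vv_j = \mA^{-1}\mV_n \mH_n \mV_n^{\top} \vv_j = \mA^{-1}\mV_n \mH_n \ve_j$, and hence $\mA\widetilde{\vz}_j = \mV_n \mH_n \ve_j$, which is precisely $\mA\vz_j$ by the Arnoldi relation~\eqref{eqn:arnoldi.precond} for the original run (noting $h_{n+1,n}=0$ on completion, so $\mA\mZ_n = \mV_n\mH_n$). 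The modified Gram--Schmidt step therefore produces identical Hessenberg entries and the next basis vector $\widetilde{\vv}_{j+1} = \vv_{j+1}$. Consequently $\mA\widetilde{\mZ}_m = \mA\mZ_m$ for all $m \le n$, their thin QR factors agree, and the second term in~\eqref{eqn:r.bound} vanishes identically.

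For the first term, $\mA\widetilde{\mM} = \mV_n \mH_n \mV_n^{\top}$, and since $\mV_n$ is orthogonal, $\mV_n^{\top} = \mV_n^{-1}$. Substituting the assumed diagonalization $\mH_n = \mY\mSigma\mY^{-1}$ yields $\mA\widetilde{\mM} = (\mV_n\mY)\mSigma(\mV_n\mY)^{-1}$, from which one reads off $\mX = \mV_n \mY$ and $\mLambda = \mSigma$ in the notation of Theorem~\ref{thm:fgmres}. Orthogonality of $\mV_n$ gives $\kappa_2(\mV_n\mY) = \kappa_2(\mY)$, and trivially $\epsilon^{(m)}(\mLambda) = \epsilon^{(m)}(\mSigma)$, so substituting into~\eqref{eqn:r.bound} yields~\eqref{eqn:r.bound2}. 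The step requiring the most care is the inductive equivalence of the two Arnoldi processes: one must check that the Hessenberg entries generated by the modified Gram--Schmidt coincide on both sides, which in turn relies critically on the no-breakdown assumption both for $\widetilde{\mM}$ to be well-defined with $\mV_n$ orthogonal and for the induction to carry through all $n$ steps.
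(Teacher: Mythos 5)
Your proof is correct and follows essentially the same route as the paper: take $\widetilde{\mM} = \mA^{-1}\mV_n\mH_n\mV_n^{\top}$, observe that the second term of~\eqref{eqn:r.bound} vanishes, and read off the diagonalization $\mA\widetilde{\mM} = (\mV_n\mY)\mSigma(\mV_n\mY)^{-1}$ with $\kappa_2(\mV_n\mY)=\kappa_2(\mY)$. The only difference is that you spell out the induction showing the two Arnoldi processes coincide (so that $\mQ_m=\widetilde{\mQ}_m$), a step the paper leaves to the informal discussion following Theorem~\ref{thm:fgmres}; making it explicit is a welcome addition but not a different argument.
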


See Section~\ref{app:exponential} for an approximately exponential convergence of $\|\vr_m\|_2$ due to $\epsilon^{(m)}(\mSigma)$.

Note that while our results assume infinite precision, another analysis~\citep{Arioli2009} was conducted based on finite arithmetics. Such analysis uses the machine precision to bound the residual norm for a sufficiently large $m$, asserting backward stability of FGMRES; on the other hand, we can obtain the convergence rate under infinite precision.

\subsection{Training Data Generation}
We generate a set of $(\vb,\vx)$ pairs to train $\mM : \vb \to \vx$. Unlike neural operators where the data generation is costly (e.g., requiring solving PDEs), which limits the training set size, for linear systems, we can sample $\vx$ from a certain distribution and obtain $\vb = \mA\vx$ with negligible cost, which creates a streaming training set of $(\vb,\vx)$ pairs without size limit.%
\footnote{Neural operators learn the mapping from the boundary condition to the solution. Strictly speaking, while $\vx$ is the solution, $\vb$ is not the boundary condition.}

There is no free lunch. One may sample $\vx \sim \tN(\zeros, \mI_n)$, which leads to $\vb \sim \tN(\zeros, \mA\mA^{\top})$. This distribution is skewed toward the dominant eigen-subspace of $\mA\mA^{\top}$. Hence, using samples from it for training may result in a poor performance of the preconditioner, when applied to inputs lying close to the bottom eigen-subspace. One may alternatively want the training data $\vb \sim \tN(\zeros, \mI_n)$, which covers uniformly all possibilities of the preconditioner input on a sphere. However, in this case, $\vx \sim \tN(\zeros, \mA^{-1}\mA^{-\top})$ is also skewed in the spectrum, causing difficulty in network training.

We resort to the Arnoldi process to obtain an approximation of $\mA^{-1}\mA^{-\top}$, which strikes a balance. Specifically, we run the Arnoldi process in $m$ steps without a preconditioner, which results in a simplification of~\eqref{eqn:arnoldi.precond}:
\begin{equation}\label{eqn:arnoldi}
\mA \mV_m = \mV_m \mH_m + h_{m+1,m} \vv_{m+1} \ve_m^{\top} = \mV_{m+1} \overline{\mH}_m.
\end{equation}
Let the singular value decomposition of $\overline{\mH}_m$ be $\mW_m \mS_m \mZ_m^{\top}$, where $\mW_m \in \real^{(m+1) \times m}$ and $\mZ_m \in \real^{m \times m}$ are orthonormal and $\mS_m \in \real^{m \times m}$ is diagonal. We define
\begin{equation}\label{eqn:training.x}
\vx = \mV_m \mZ_m \mS_m^{-1} \vepsilon, \qquad \vepsilon \sim \tN(\zeros, \mI_m).
\end{equation}
Then, with simple algebra we obtain that
\begin{alignat*}{3}
  \vx &\in \range(\mV_m), &\qquad
  \vx &\sim \tN(\zeros, \mSigma_m^{\vx}), &\qquad
  \mSigma_m^{\vx} &= (\mV_m\overline{\mH}_m^+)(\mV_m\overline{\mH}_m^+)^{\top}, \\
  \vb &\in \range(\mV_{m+1}), &\qquad
  \vb &\sim \tN(\zeros, \mSigma_m^{\vb}), &\qquad
  \mSigma_m^{\vb} &= (\mV_{m+1}\mW_m)(\mV_{m+1}\mW_m)^{\top}.
\end{alignat*}

The covariance of $\vb$, $\mSigma_m^{\vb}$, is a projector, because $\mV_{m+1}$ and $\mW_m$ are orthonormal. As $m$ grows, this covariance tends to the identity $\mI_n$. By the theory of the Arnoldi process~\citep{Morgan2002}, $\mV_{m+1}$ contains better and better approximations of the extreme eigenvectors as $m$ increases. Hence, $\tN(\zeros, \mSigma_m^{\vb})$ has a high density close to the bottom eigen-subspace of $\mA$. Meanwhile, because $\mSigma_m^{\vb}$ is low-rank, the distribution has zero density along many directions. Therefore, we sample $\vx$ from both $\tN(\zeros, \mSigma_m^{\vx})$ and $\tN(\zeros, \mI_n)$ to form each training batch, to cover more important subspace directions.

We note that the use of Gaussian distributions brings in the convenience to generally analyze the input/output distributions. It is possible that other distributions are more effective if knowledge of $\vx$ exists and can be exploited.

\subsection{Scale-Equivariant Graph Neural Preconditioner}
We now define the neural network $\mM$ that approximates $\mA^{-1}$. Naturally, a (sparse) matrix $\mA = [a_{ij}]$ admits a graph interpretation, similar to how AMG interprets the coefficient matrix with a graph. Treating $\mA$ as the graph adjacency matrix allows us to use GNNs~\citep{Kipf2017,Hamilton2017,Velickovic2018,Xu2019} to parameterize the preconditioner. These neural networks perform graph convolutions in a neighborhood of each node, reducing the quadratic pairwise cost by ignoring nodes faraway from the neighborhood.

Our neural network is based on the graph convolutional network (GCN)~\citep{Kipf2017}. A GCN layer is defined as $\text{GCONV}(\mX) = \relu(\widehat{\mA} \mX \mW)$, where $\widehat{\mA} \in \real^{n \times n}$ is some normalization of $\mA$, $\mX \in \real^{n \times d_{\text{in}}}$ is the input data, and $\mW \in \real^{d_{\text{in}} \times d_{\text{out}}}$ is a learnable parameter.

\begin{figure}[t]
  \centering
  \includegraphics[width=.9\linewidth]{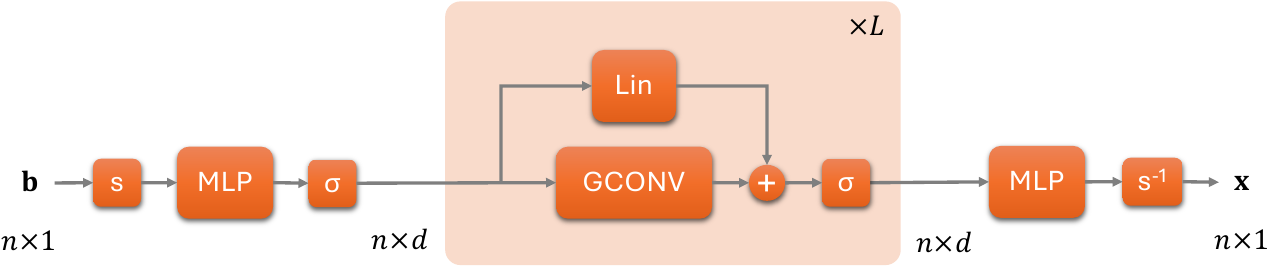}
  \vskip -5pt
  \caption{Our GNN architecture for the preconditioner $\mM$. The nonlinearity $\sigma$ is $\relu$. The component $s$ denotes the scale-equivariant operator~\eqref{eqn:s}.}
  \label{fig:architecture}
\end{figure}

Our neural network, as illustrated in Figure~\ref{fig:architecture}, enhances from GCN in a few manners:
\begin{enumerate}[leftmargin=*]
\item We normalize $\mA$ differently. Specifically, we define
  \begin{equation}\label{eqn:gamma}
  \widehat{\mA} = \mA / \gamma \quad\text{where}\quad
  \textstyle
  \gamma = \min \Big\{ \max_i \big\{\sum_j |a|_{ij}\big\}, \max_j \big\{\sum_i |a|_{ij}\big\} \Big\}.
  \end{equation}
  The factor $\gamma$ is an upper bound of the spectral radius of $\mA$ based on the Gershgorin circle theorem~\citep{Gerschgorin1931}. This avoids division-by-zero that may occur in the standard GCN normalization, because the nonzeros of $\mA$ can be both positive and negative.

\item We add a residual connection to GCONV, to allow stacking a deeper GNN without suffering the smoothing problem~\citep{Chen2020}. Specifically, we define a Res-GCONV layer to be
  \begin{equation}\label{eqn:res.gconv}
  \text{Res-GCONV}(\mX) = \relu( \mX\mU + \widehat{\mA}\mX\mW ).
  \end{equation}
  Moreover, we let the parameters $\mU$ and $\mW$ be square matrices of dimension $d \times d$, such that the input/output dimensions across layers do not change. We stack $L$ such layers.

\item We use an MLP encoder (resp.\ MLP decoder) before (resp.\ after) the $L$ Res-GCONV layers to lift (resp.\ project) the dimensions.
\end{enumerate}

Additionally, the main novelty of the architecture is scale-equivariance. Note that the operator to approximate, $\mA^{-1}$, is scale-equivariant, but a general neural network is not guaranteed so.
Because the input $\vb$ may have very different scales (due to the sampling approach considered in the preceding subsection), to facilitate training, we design a parameter-free scaling $s$ and back-scaling $s^{-1}$:
\begin{equation}\label{eqn:s}
\textstyle
s(\cdot) = \frac{\sqrt{n}}{\tau} \cdot \quad\text{and}\quad
s^{-1}(\cdot) = \frac{\tau}{\sqrt{n}} \, \cdot \,, \quad\text{where } \tau = \|\vb\|_2.
\end{equation}
We apply $s$ at the beginning and $s^{-1}$ at the end of the neural network. Effectively, we restrict the input space of the neural network from the full $\real^n$ to a more controllable space---the sphere $\sqrt{n}\sphere^{n-1}$. Clearly, $s$ guarantees scale-equivariance; i.e., $\mM(\alpha \vb) = \alpha \mM(\vb)$ for any scalar $\alpha\ne0$.

\section{Evaluation Methodology and Experiment Setting}
\textbf{Problems.} Our evaluation strives to cover a large number of problems, which come from as diverse application areas as possible. To this end, we turn to the SuiteSparse matrix collection
\url{https://sparse.tamu.edu},
which is a widely used benchmark in numerical linear algebra. We select \emph{all} square, real-valued, and non-SPD
matrices whose number of rows falls between 1K and 100K and whose number of nonzeros is fewer than 2M. This selection results in 867 matrices from 50 application areas. Some applications are involved with PDEs (such as computational fluid dynamics), while others come from graph, optimization, economics, and statistics problems.

To facilitate solving the linear systems, we prescale each $\mA$ by $\gamma$ defined in~\eqref{eqn:gamma}. This scaling is a form of normalization, which avoids the overall entries of the matrix being exceedingly small or large. All experiments assume the ground truth solution $\vx = \ones$.%
\footnote{This is a common practice. Avoid setting $\mathbf{x}$ to be Gaussian random, because this (partially) matches the training data generation, resulting in the inability to test out-of-distribution cases.}
FGMRES starts with $\vx_0=\zeros$.

\textbf{Compared methods.} We compare GNP with three widely used general-purpose preconditioners---ILU, AMG, and GMRES (using GMRES to precondition GMRES is also called inner-outer GMRES)---and one simple-to-implement but weak preconditioner---Jacobi. The endeavor of preconditioning over 800 matrices from different domains limits the choices, but these preconditioners are handy with Python software support.

ILU comes from \texttt{scipy.sparse.linalg.spilu}; it in turn comes from SuperLU~\citep{Li2005,Li2010}, which implements the thresholded version of ILU~\citep{Saad1994}. We use the default drop tolerance and fill factor without tuning.
AMG comes from PyAMG~\citep{Bell2023}. Specifically, we use \texttt{pyamg.blackbox.solver().aspreconditioner} as the preconditioner, with the configuration of the blackbox solver computed from \texttt{pyamg.blackbox.solver\_configuration}.
GMRES is self-implemented. For (the inner) GMRES, we use 10 iterations and stop it when it reaches a relative residual norm tolerance of \texttt{1e-6}. For (the outer) FGMRES, the restart cycle is 10.

Note that sophisticated preconditioners require tremendous efforts to implement robustly. Hence, we prioritize ones that are more robust and that come with continual supports. For example, we also experiment with AmgX~\citep{Naumov2015} but find that while being faster, it throws more errors than does PyAMG. See Section~\ref{app:amgx} for details.

\textbf{Stopping criteria.} The stopping criteria of all linear system solutions are \texttt{rtol = 1e-8} and \texttt{maxiters = 100}.
For time comparisons, we additionally run experiments by enabling \texttt{timeout} and disabling \texttt{maxiters}. We set the timeout to be the maximum solution time among all preconditioners when using \texttt{maxiters = 100} as the stopping criterion.

\textbf{Evaluation metrics.} Evaluating the solution qualities and comparing different methods for a large number of problems require programmable metrics, but defining these metrics is harder than it appears to be. Comparing the number of iterations alone is insufficient, because the per-iteration time of each method is different. However, to compare time, one waits until the solution reaches \texttt{rtol}, which is impossible to set if one wants \emph{all} solutions to reach this tolerance. Thus, we can use \texttt{maxiters} and \texttt{timeout} to terminate a solution. However, one solution may reach \texttt{maxiters} early (in time) and attain poor accuracy, while another solution may reach \texttt{timeout} very late but attain good accuracy. Which one is better is debatable.

Hence, we propose two novel metrics, depending on the use of the stopping criteria. The first one, \emph{area under the relative residual norm curve with respect to iterations}, is defined as
\[
\text{Iter-AUC} = \sum_{i=0}^{\text{iters}} \log_{10} r_i - \log_{10} \texttt{rtol}, \quad
r_i = \| \vb - \mA \vx_i \|_2 / \| \vb \|_2,
\]
where \texttt{iters} is the actual number of iterations when FGMRES stops and $r_i$ is the relative residual norm at iteration $i$. We take logarithm because residual plots are typically done in the log scale. This metric compares methods based on the iteration count, taking into account the history (e.g., convergence speed at different stages) rather than the final error alone. This metric is used when the stopping criteria are \texttt{rtol} and \texttt{maxiters}.

The second metric, \emph{area under the relative residual norm curve with respect to time}, is defined as
\[
\text{Time-AUC} = \int_0^T [\log_{10} r(t) - \log_{10} \texttt{rtol}] \, dt
\approx \sum_{i=1}^{\text{iters}} [\log_{10} r_i - \log_{10} \texttt{rtol}] (t_i - t_{i-1}),
\]
where $T$ is the elapsed time when FGMRES stops, $r(t)$ is the relative residual norm at time $t$, and $t_i$ is the elapsed time at iteration $i$. This metric compares methods based on the solution time, taking into account the history of the errors. It is used when the stopping criteria are \texttt{rtol} and \texttt{timeout}.

\textbf{Hyperparameters.} To feasibly train over 800 GNNs, we use the same set of hyperparameters for each of them without tuning. There is a strong potential that the GNN performance can be greatly improved with careful tuning. Our purpose is to understand the general behavior of GNP, particularly its strengths and weaknesses. We use $L=8$ Res-GCONV layers, set the layer input/output dimension to 16, and use 2-layer MLPs with hidden dimension 32 for lifting/projection. We use Adam~\citep{Kingma2015} as the optimizer, set the learning rate to \texttt{1e-3}, and train for 2000 steps with a batch size of 16. We apply neither dropouts nor weight decays. Because data are sampled from the same distribution, we use the model at the best training epoch as the preconditioner, without resorting to a validation set.

We use the $\ell_1$ residual norm $\|\mA\mM(\vb)-\mA\vx\|_1$ as the training loss, as a common practice of robust regression. We use $m=40$ Arnoldi steps when sampling the $(\vx,\vb)$ pairs according to~\eqref{eqn:training.x}. Among the 16 pairs in a batch, 8 pairs follow~\eqref{eqn:training.x} and 8 pairs follow $\vx \sim \tN(\zeros, \mI_n)$.

\textbf{Compute environment.} Our experiments are conducted on a machine with one Tesla V100(16GB) GPU, 96 Intel Xeon 2.40GHz cores, and 386GB main memory.  All code is implemented in Python with Pytorch. All computations (training and inference) use the GPU whenever supported.

\section{Results}
We organize the experiment results by key questions of interest.

\textbf{How does GNP perform compared with traditional preconditioners?}
We consider three factors: convergence speed, runtime speed, and preconditioner construction cost.

\begin{figure}[t]
  \begin{minipage}[b]{.68\linewidth}
    \centering
    \includegraphics[width=\linewidth]{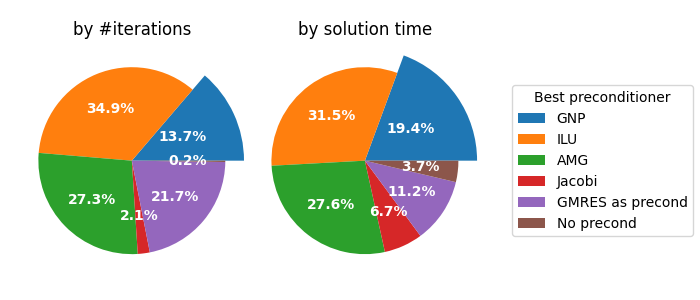}
    \vskip -20pt
    \captionof{figure}{Percentage of problems on which each preconditioner performs the best.}
    \label{fig:precond_order_pie_chart}
  \end{minipage}\hfill
  \begin{minipage}[b]{.31\linewidth}
    \centering
    \small
    \tt
    \setlength{\tabcolsep}{2pt}
    \begin{tabular}{c|c|c}
      & {\normalfont by \#iter} & {\normalfont by time}\\
      25\%  & 1.91e+00 & 1.18e+00\\
      50\%  & 6.74e+00 & 2.33e+00\\
      75\%  & 6.78e+03 & 8.16e+01\\
      100\% & 1.89e+10 & 1.91e+10\\
    \end{tabular}
    \vskip -5pt
    \captionof{table}{Distribution of the residual-norm ratio between the second best preconditioner and GNP, when GNP performs the best. Distribution is described by using percentiles.}
    \label{tab:best_margin}
  \end{minipage}
\end{figure}

\begin{figure}[t]
  \centering
  \includegraphics[width=\linewidth]{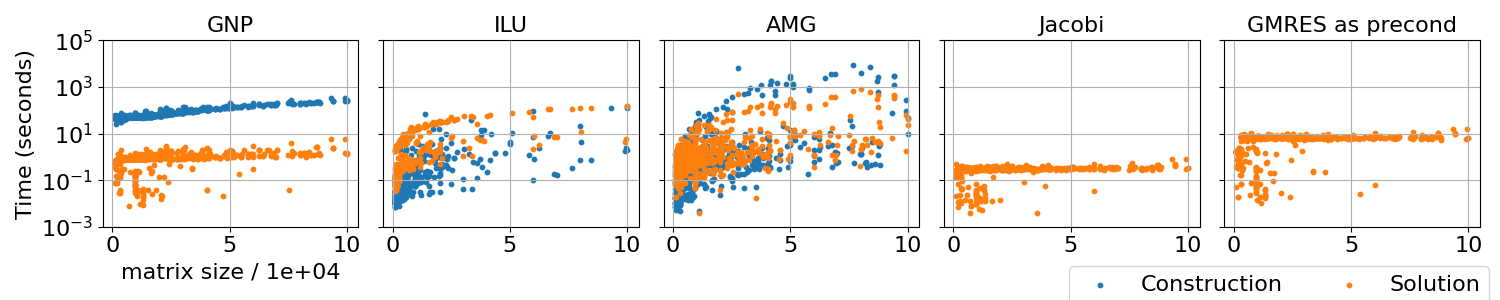}
  \vskip -5pt
  \caption{Preconditioner construction time and solution time (using \texttt{maxiters} to stop). The construction time of Jacobi is negligible and not shown. GMRES does not require construction.}
  \label{fig:construction_time_vs_size}
\end{figure}

In Figure~\ref{fig:precond_order_pie_chart} we show the percentage of problems on which each preconditioner performs the best, with respect to iteration counts and solution time (see the Iter-AUC and Time-AUC metrics defined in the preceding section). GNP performs the best for a substantial portion of the problems in both metrics. In comparison, ILU and AMG perform the best for more problems, while Jacobi, GMRES (as a preconditioner), and no preconditioner performs the best for fewer problems in the time metric.

That ILU and AMG are the best for more problems does not compromise the competitiveness of GNP. For one reason, they are less robust and sometimes bare a significantly higher cost in construction, as will be revealed later. Another reason is that GNP can be used when other preconditioners are less satisfactory. In Table~\ref{tab:best_margin}, we summarize the distribution of the residual-norm ratio between the second best preconditioner and GNP, when GNP is the best. The ratio means a factor of $x$ decrease in the residual norm if GNP is used in place of other preconditioners. Under the Iter-AUC metric, a factor of 6780x decrease can be seen at the 75th percentile, and in the best occasion, more than ten orders of magnitude decrease is seen. Similarly under the Time-AUC metric. These significant reductions manifest the usefulness of GNP.

In Figure~\ref{fig:construction_time_vs_size} and Figure~\ref{fig:construction_time_vs_size_linear} of Section~\ref{app:time_vs_size}, we plot the preconditioner construction time and solution time for each matrix and each preconditioner. Here, the solution is terminated by \texttt{maxiters} rather than \texttt{timeout} (such that the times are different across matrices). Two observations can be made. First, the training time of GNP is nearly proportional to the matrix size, while the construction time of ILU and AMG is hardly predictable. Even though for many of the problems, the time is only a fraction of that of GNP, there exist quite a few cases where the time is significantly longer. In the worst case, the construction of the AMG preconditioner is more than an order of magnitude more costly than that of GNP.

Second, there is a clear gap between the construction and solution time for GNP, but no such gap exists for ILU and AMG. One is tempted to compare the overall time, but one preconditioner can be used for any number of right-hand sides, and hence different conclusions may be reached regarding time depending on this number. When it is large, the solution time dominates, in which case one sees that in most of the cases GNP is faster ILU, AMG, and GMRES.

\textbf{On what problems does GNP perform the best?}
We expand the granularity of Figure~\ref{fig:precond_order_pie_chart} into Figure~\ref{fig:precond_size_cond_kind_stacked_bar_cnt}, overlaid with the distributions of the matrices regarding the size, the condition number, and the application area, respectively. One finding is that GNP is particularly useful for ill-conditioned matrices (i.e., those with a condition number $\ge 10^{16}$). ILU is less competitive in these problems, possibly because it focuses on the sparsity structure, whose connection with the spectrum is less clear. Another finding is that GNP is particularly useful in three application areas: chemical process simulation problems, economic problems, and eigenvalue/model reduction problems. These areas constitute many problems in the dataset; both the number and the proportion of them in which GNP performs the best are substantially high.

On the other hand, the matrix size does not affect much the GNP performance. One sees this more clearly when consulting Figure~\ref{fig:precond_size_cond_kind_stacked_bar_prop} in Section~\ref{app:bar.chart.prop}, which is a ``proportion'' version of Figure~\ref{fig:precond_size_cond_kind_stacked_bar_cnt}: the proportion of problems on which GNP performs the best is relatively flat across matrix sizes. Similarly, symmetry does not appear to play a distinguishing role either: GNP performs the best on 7.6\% of the symmetric matrices and 17.5\% of the nonsymmetric matrices.

\begin{figure}[t]
  \centering
  \includegraphics[width=\linewidth]{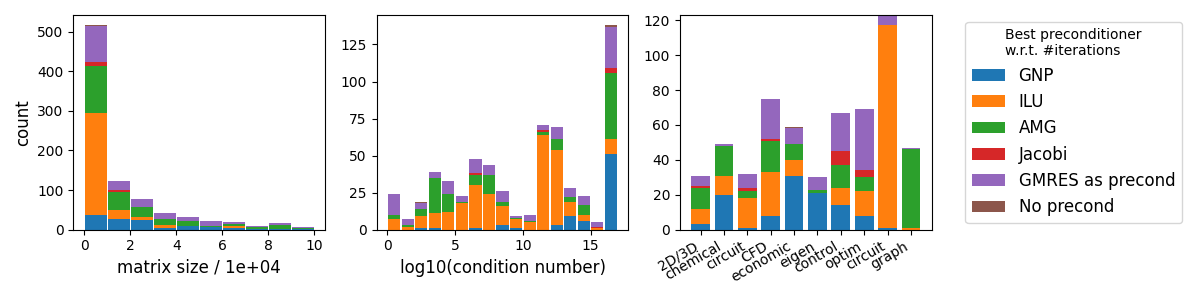}
  \vskip -10pt
  \caption{Breakdown of best preconditioners with respect to matrix sizes, condition numbers, and application areas. Only the application areas with the top number of problems are shown. The last bar in the middle plot is for condition number $\ge10^{16}$.}
  \label{fig:precond_size_cond_kind_stacked_bar_cnt}
\end{figure}

\begin{table}[t]
  \centering
  \small
  \caption{Failures of preconditioners (count and proportion).}
  \label{tab:plot_compar_time}
  \vskip -5pt
  \begin{tabular}{rrrrrr}
    \toprule
    & GNP & ILU & AMG & Jacobi & GMRES as precond \\
    \midrule
    Construction failure & 0 (0.00\%) & 348 (40.14\%) & 62 (7.15\%) & N/A & N/A \\
    Solution failure & 1 (0.12\%) & 61 (\phantom{0}7.04\%) & 5 (0.58\%) & 53 (6.11\%) & 2 (0.23\%) \\
    \bottomrule
  \end{tabular}
\end{table}

\textbf{How robust is GNP?}
We summarize in Table~\ref{tab:plot_compar_time} the number of failures for each preconditioner. GNP and GMRES are highly robust. We see that neural network training does not cause any troubles, which is a practical advantage. In contrast, ILU fails for nearly half of the problems, AMG fails for nearly 8\%, and Jacobi fails for over 6\%. According to the error log, the common failures of ILU are that ``(f)actor is exactly singular'' and that ``matrix is singular ... in file ilu\_dpivotL.c'', while the common failures of AMG are ``array ... contain(s) infs or NaNs''. Meanwhile, solution failures occur when the residual norm tracked by the QR factorization of the upper Hessenberg $\overline{\mH}_m$ fails to match the actual residual norm. While ILU and AMG perform the best for more problems than does GNP, safeguarding their robustness is challenging, rendering GNP more attractive.

\textbf{What does the convergence history look like?}
In Figure~\ref{fig:example_history}, we show a few examples when GNP performs the best. Note that in this case, ILU fails for most of the problems, in either the construction or the solution phase. The examples indicate that the convergence of GNP either tracks that of other preconditioners with a similar rate (\texttt{Simon/venkat25}), or becomes significantly faster. Notably, on \texttt{VanVelzen/std1\_Jac3}, GNP uses only four iterations to reach \texttt{1e-8}, while other preconditioners take 100 iterations but still cannot decrease the relative residual to below \texttt{1e-2}.

\begin{figure}[t]
  \centering
  \includegraphics[width=\linewidth]{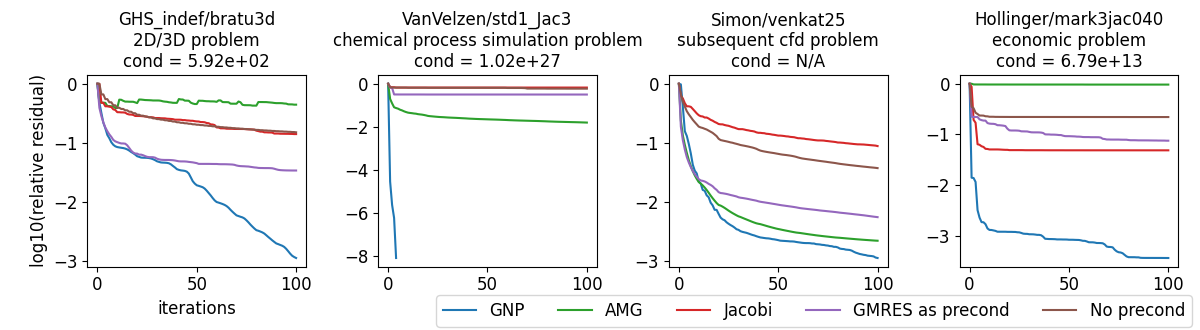}
  \vskip -5pt
  \caption{Example convergence histories.}
  \label{fig:example_history}
\end{figure}

We additionally plot the convergence curves with respect to time in Figure~\ref{fig:example_history_more1} in Section~\ref{app:convergence.time}. An important finding to note is that GMRES as a preconditioner generally takes much longer time to run than GNP. This is because GMRES is bottlenecked by the orthogonalization process, shadowing the trickier cost comparison between more matrix-vector multiplications in GMRES and fewer matrix-matrix multiplications in GNP.

\textbf{Are the proposed training-data generation and the scale-equivariance design necessary?}
In Figure~\ref{fig:neural_net_kde_plot}, we compare the proposed designs versus alternatives, for both the training loss and the relative residual norm. In the comparison of training data generation, using $\vx \sim \tN(\zeros, \mI_n)$ leads to lower losses, while using $\vb \sim \tN(\zeros, \mI_n)$ leads to higher losses. This is expected, because the former expects similar outputs from the neural network, which is easier to train, while the latter expects drastically different outputs, which make the network difficult to train. Using $\vx$ from the proposed mixture leads to losses lying in between. This case leads to the best preconditioning performance---more problems having lower residual norms (especially those near \texttt{rtol}). In other words, the proposed training data generation strikes the best balance between easiness of neural network training and goodness of the resulting preconditioner.

In the comparison of using the scaling operator $s$ versus not, we see that the training behaviors barely differ between the two choices, but using scaling admits an advantage that more problems result in lower residual norms. These observations corroborate the choices we make in the neural network design and training data generation.

\begin{figure}[t]
  \centering
  \includegraphics[width=\linewidth]{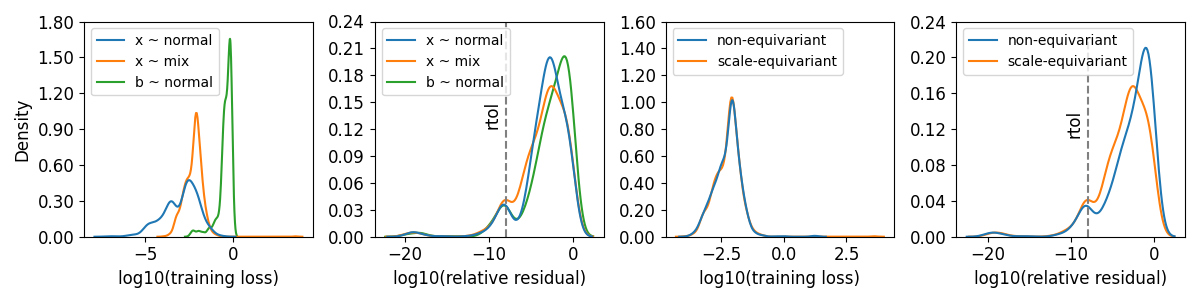}
  \vskip -10pt
  \caption{Left: comparison of training data generation; right: comparison of scale-equivariance.}
  \label{fig:neural_net_kde_plot}
\end{figure}

\section{Discussions and Conclusions}
We have presented a GNN approach to preconditioning Krylov solvers. This is the first work in our knowledge to use a neural network as an approximation to the matrix inverse, without exploiting information beyond the matrix itself (i.e., an algebraic preconditioenr). Compared with traditional algebraic preconditioners, this approach is robust with predictable construction costs and is the most effective in many problems. We evaluate the approach on more than 800 matrices from 50 application areas, the widest coverage in the literature.

One common skepticism on neural network approaches for preconditioning is that in general, network training is costly and in our case, the network can be used for only one $\mA$. We have shown to the contrary, our training cost sometimes can be much lower than the construction cost of widely used preconditioners such as ILU and AMG, because network training is more predictable while the other preconditioners suffer the manipulation of the irregular nonzero patterns. Moreover, a benefit of neural networks is that compute infrastructures and library supports are in place, which facilitate implementation and offer robustness, as opposed to other algebraic preconditioners whose implementations are challenging and robustness is extremely difficult to achieve. Additionally, traditional algebraic preconditioners are constructed for only one $\mA$ as well, not more flexible than ours.

While one may hope that a single neural network can solve many, if not all, linear systems (different $\mA$'s), as recent approaches (PINN, NO, or learning the incomplete factors) appear to suggest, it is unrealistic to expect that a single network has the capacity to learn the matrix inverse for all matrices. Existing approaches work on a distribution of linear systems for the \emph{same} problem and generalize under the same problem (e.g., by varying the grid size or PDE coefficients). For GNP to achieve so, we may fine-tune a trained network with a cost lower than training from scratch, or augment the GNN input with extra information (e.g., PDE coefficients). There unlikely exists an approach that remains effective beyond the problem being trained on.

Our work can be extended in many avenues. First, an immediate follow up is the preconditioning of SPD matrices. While a recent work on PDEs~\citep{Rudikov2024} shows promise on the combined use of NOs and flexible CG~\citep{Notay2000}, it is relatively fragile with respect to a large variation of the preconditioner in other problems. We speculate that a split preconditioner can work more robustly and some form of autoencoder networks better serves this purpose.

Second, while GNP is trained for an individual matrix in this work for simplicity, it is possible to extend it to a sequence of evolving matrices, such as in sequential problems or time stepping. Continual learning~\citep{Wang2024}, which is concerned with continuously adapting a trained neural network for slowly evolving data distributions, can amortize the initial preconditioner construction cost with more and more linear systems in sequel.

Third,
the GPU memory limits the size of the matrices and the complexity of the neural networks that we can experiment with. Future work can explore multi-GPU and/or distributed training of GNNs~\citep{Kaler2022,Kaler2023} for scaling GNP to even larger matrices. The training for large graphs typically uses neighborhood sampling to mitigate the ``neighborhood explosion'' problem in GNNs~\citep{Hamilton2017,Chen2018a}. Some sampling approaches, such as layer-wise sampling~\citep{Chen2018a}, are tied to sketching the matrix product $\widehat{\mA}\mX$ inside the graph convolution layer~\eqref{eqn:res.gconv} with certain theoretical guarantees~\citep{Chen2018}.

Fourth, while we use the same set of hyperparameters for evaluation, nothing prevents problem-specific hyperparameter tuning when one works on an application, specially a challenging one where no general-purpose preconditioners work sufficiently well. We expect that this paper's results based on a naive hyperparameter setting can be quickly improved by the community. Needless to say, improving the neural network architecture can push the success of GNP much farther.

\subsubsection*{Acknowledgments}
The author would like to thank eight anonymous reviewers whose comments and suggestions help significantly improve this paper.

\bibliographystyle{iclr2025_conference}
\bibliography{reference2}

\newpage
\appendix

\section{Supporting Code}
The implementation of GNP is available at \url{https://github.com/jiechenjiechen/GNP}.

\section{FGMRES}\label{app:algo}
FGMRES is summarized in Algorithm~\ref{algo:fgmres}.

\begin{algorithm}[h]
  \caption{FGMRES with $\mM$ being a nonlinear operator}
  \label{algo:fgmres}
  \begin{algorithmic}[1]
    \State Let $\vx_0$ be given. Define $\overline{\mH}_m \in \real^{(m+1) \times m}$ and initialize all its entries $h_{ij}$ to zero
    \Loop{ until \texttt{maxiters} is reached}
    \State Compute $\vr_0 = \vb - \mA \vx_0$, $\beta = \| \vr_0 \|_2$, and $\vv_1 = \vr_0 / \beta$
    \For{$j = 1, \ldots, m$}
    \State Compute $\vz_j = \mM( \vv_j )$ and $\vw = \mA \vz_j$
    \For{$i = 1, \ldots, j$}
    \State Compute $h_{ij} = \vw^{\top} \vv_i$ and $\vw \gets \vw - h_{ij} \vv_i$
    \EndFor
    \State Compute $h_{j+1,j} = \| \vw \|_2$ and $\vv_{j+1} = \vw / h_{j+1,j}$
    \EndFor
    \State {\small Define $\mZ_m = [\vz_1, \ldots, \vz_m]$ and compute $\vx_m = \vx_0 + \mZ_m \vy_m$ where $\vy_m = \argmin_{\vy} \| \beta \ve_1 - \overline{\mH}_m \vy \|_2$}
    \State If $\| \vb - \mA \vx_m \|_2 < \texttt{tol}$, exit the loop; otherwise, set $\vx_0 \gets \vx_m$
    \EndLoop
  \end{algorithmic}
\end{algorithm}

\section{Proofs}\label{app:proof}

\begin{proof}[Proof of Theorem~\ref{thm:fgmres}]
  Write $\| \vr_m \|_2 \le \| \widetilde{\vr}_m \|_2 + \| \vr_m - \widetilde{\vr}_m \|_2$. It is well known that
  \[
  \| \widetilde{\vr}_m \|_2 \le \kappa_2(\mX) \epsilon^{(m)}(\mLambda) \| \vr_0 \|_2;
  \]
  see, e.g., \citet[Proposition 6.32]{Saad2003}. On the other hand, because $\vr_m = \vr_0 - \mA \mZ_m \vy_m$ where $\vy_m$ minimizes $\| \vr_0 - \mA \mZ_m \vy \|_2$, we have $\vr_m = \vr_0 - ( \mA \mZ_m )( \mA \mZ_m )^+ \vr_0 = \vr_0 - \mQ_m \mQ_m^{\top} \vr_0$. Therefore, $\| \vr_m - \widetilde{\vr}_m \|_2 = \| \mQ_m \mQ_m^{\top} \vr_0 - \widetilde{\mQ}_m \widetilde{\mQ}_m^{\top} \vr_0 \|_2 \le \| \mQ_m \mQ_m^{\top} - \widetilde{\mQ}_m \widetilde{\mQ}_m^{\top} \|_2 \| \vr_0 \|_2$.
\end{proof}

\begin{proof}[Proof of Corollary~\ref{cor:fgmres}]
  By taking $\widetilde{\mM} = \mA^{-1} \mV_n \mH_n \mV_n^{\top}$ and noting that $\mV_n^{\top} = \mV_n^{-1}$, we have
  \[
  (\mV_n \mY)^{-1} \mA \widetilde{\mM} (\mV_n \mY) = \diag(\sigma_1, \ldots, \sigma_n).
  \]
  Then, following Theorem~\ref{thm:fgmres}, $\| \vr_m \|_2 \le \kappa_2(\mV_n \mY) \epsilon^{(m)}(\mSigma) \| \vr_0 \|_2$. We conclude the proof by noting that $\kappa_2(\mV_n \mY) = \kappa_2(\mY)$.
\end{proof}

\section{Approximately Exponential Convergence of $\|\vr_m\|_2$}\label{app:exponential}
We could bound the minimax polynomial $\epsilon^{(m)}$ by using Chebyshev polynomials to obtain an (approximately) exponential convergence of the residual norm $\| \mathbf{r}_m \|_2$ for large $m$.

Assume that all the eigenvalues $\sigma_i$ of $\mSigma$ are enclosed by an ellipse $E(c,d,a)$ which excludes the origin, where $c$ is the center, $d$ is the focal distance, and $a$ is major semi-axis. We have
\[
\epsilon^{(m)}(\mSigma) \le \min_{p \in \polyspace_m, \, p(0)=1} \max_{\sigma \in E(c,d,a)} |p(\sigma)|,
\]
by the fact that the maximum modulus of a complex analytical function is reached on the boundary of the domain. Then, we invoke Eqn (6.119) of~\citet{Saad2003} and obtain
\[
\epsilon^{(m)}(\mSigma) \le \frac{C_m(a/d)}{C_m(c/d)},
\]
where $C_m$ denotes the (complex) Chebyshev polynomial of degree $m$. When $m$ is large,
\[
\frac{C_m(a/d)}{C_m(c/d)} \approx \left( \frac{a + \sqrt{a^2-d^2}}{c + \sqrt{c^2-d^2}} \right)^m,
\]
which gives an (approximately) exponential function in $m$. Applying Corollary~\ref{cor:fgmres}, we conclude that when $m$ is large,
\[
\| \vr_m \|_2 \lessapprox
\kappa_2(\mY) \| \vr_0 \|_2 \left( \frac{a + \sqrt{a^2-d^2}}{c + \sqrt{c^2-d^2}} \right)^m.
\]

\section{Scalability}\label{app:scalability}
Based on the architecture outlined in Figure~\ref{fig:architecture}, the computational cost of GNP is dominated by matrix-matrix multiplications, where the left matrix is either $\mA$ (e.g., in GCONV) or a tall one that has $n$ rows (e.g., in the MLP and Lin layers and in GCONV). This forward analysis holds true for training and inference, because the back propagation also uses these matrix-matrix multiplications. Hence, the computational cost of GNP scales as $O(\nnz(\mA))$, the same as that of the Krylov solver.

Practical considerations, on the other hand, are more sensitive to the constants hidden inside the big-O notation. For example, one needs to store many (but still a constant number of) vectors of length $n$, due to the batch size, the hidden dimensions of the neural network, and the automatic differentiation. Denote by $c$ this constant number; then, the GPU faces a storage pressure of $\nnz(\mA) + cn$. Once this amount is beyond the memory capacity of the GPU, one either performs training (and even inference) by using CPU only, offloads some data to the CPU and transfers them back to the GPU on demand, or undertakes more laborious engineering by distributing $\mA$ and other data across multiple GPUs. For the last option (multi-GPU and/or distributed training), the literature of GNN training brings in additional techniques (such as sampling and mini-batching the graph) to accelerate the computation~\citep{Kaler2022,Kaler2023}.

\section{Additional Experiment Results}\label{app:additional.exp}

\subsection{Additional Metrics for Comparing Preconditioners}\label{app:metric}
In addition to the Iter-AUC and Time-AUC metrics, one may be interested in using the final relative residual norm to compare the preconditioners. To this end, we summarize the results in Figure~\ref{fig:precond_order_pie_chart_res} by using this residual metric, as an extension of Figure~\ref{fig:precond_order_pie_chart}. Note that for each problem, we solve the linear system twice, once using \texttt{rtol} and \texttt{maxiters} as the stopping criteria; and the other time using \texttt{rtol} and \texttt{timeout}. Hence, there are two charts in Figure~\ref{fig:precond_order_pie_chart_res}, depending on how the iterations are terminated.

\begin{figure}[ht]
  \centering
  \includegraphics[width=.68\linewidth]{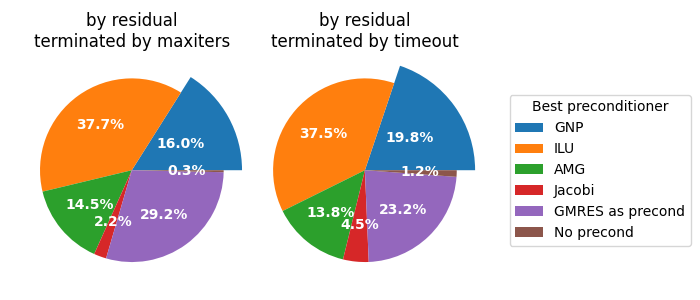}
  \vskip -15pt
  \caption{Percentage of problems on which each preconditioner performs the best.}
  \label{fig:precond_order_pie_chart_res}
\end{figure}

We see that across these two termination scenarios, observations are similar. Compared to the use of the Iter-AUC and Time-AUC metrics, GNP is the best for a similar portion of problems under the residual metric. The portion of problems that ILU and GMRES (as a preconditioner) perform the best increases under this metric, while the portion that AMG performs the best decreases. Jacobi and no preconditioner remain less competitive.

\subsection{Preconditioner Construction Time and Solution Time}\label{app:time_vs_size}
In Figure~\ref{fig:construction_time_vs_size_linear}, we plot the preconditioner construction time and solution time for each matrix and each preconditioner. This figure is a counterpart of Figure~\ref{fig:construction_time_vs_size} by removing the log scale in the y axis. One sees that the GNP construction time is nearly proportional to the matrix size.

\begin{figure}[ht]
  \centering
  \includegraphics[width=\linewidth]{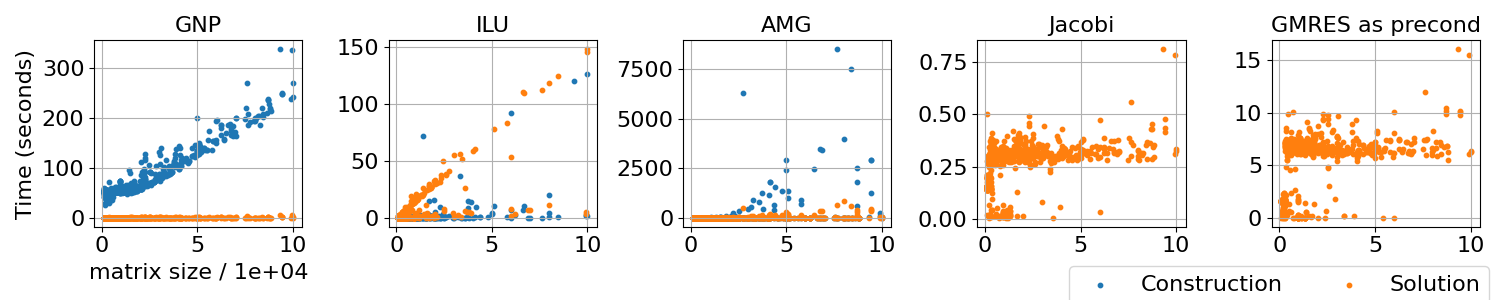}
  \vskip -5pt
  \caption{Preconditioner construction time and solution time (using \texttt{maxiters} to stop). The y axis is in the linear scale.}
  \label{fig:construction_time_vs_size_linear}
\end{figure}

\subsection{Breakdown of Best Preconditioners With Respect to Matrix Distributions}\label{app:bar.chart.prop}
In Figure~\ref{fig:precond_size_cond_kind_stacked_bar_prop}, we plot the proportion of problems on which each preconditioner performs the best, for different matrix sizes, condition numbers, and application areas. This figure is a counterpart of Figure~\ref{fig:precond_size_cond_kind_stacked_bar_cnt}. The main observation is that the proportion of problems on which GNP performs the best is relatively flat across matrix sizes. Other observations follow closely from those of Figure~\ref{fig:precond_size_cond_kind_stacked_bar_cnt}; namely, GNP is particularly useful for ill-conditioned problems and some application areas (chemical process simulation problems, economic problems, and eigenvalue/model reduction problems).

\begin{figure}[ht]
  \centering
  \includegraphics[width=\linewidth]{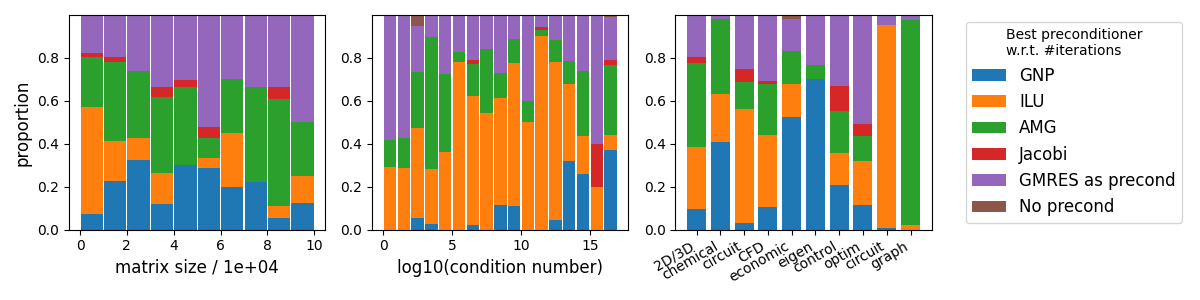}
  \vskip -10pt
  \caption{Proportion of problems on which each preconditioner performs the best, for different matrix sizes, condition numbers, and application areas. Only the application areas with the top number of problems are shown. The last bar in the middle plot is for condition number $\ge10^{16}$. Note that not every matrix has a known condition number.}
  \label{fig:precond_size_cond_kind_stacked_bar_prop}
\end{figure}

\subsection{Comparison between PyAMG and AmgX}\label{app:amgx}
Besides PyAMG, another notable implementation of AMG (which supports using it as a preconditioner and supports GPU) is AmgX \url{https://github.com/NVIDIA/AMGX}. Here, we compare the two implementations in terms of robustness and speed.

Unlike PyAMG, which is a blackbox, AmgX leaves the choice of the AMG method and the parameters to the user. We used the driver \texttt{examples/amgx\_capi.c} and the configuration \texttt{src/configs/FGMRES\_AGGREGATION.json}, which appears to be the most reasonable for the diverse matrices and application areas at hand. This setting uses aggregation-based AMG rather than classical AMG, which is also consistent with the preference expressed by PyAMG.

Here are some findings.

AmgX is less robust than PyAMG. Table~\ref{tab:plot_compar_pyamg_and_amgx} shows that AmgX causes more failures during the construction as well as the solution phase. In fact, there were three cases where AmgX hanged, which hampered automated benchmarking.

\begin{table}[h]
  \centering
  \caption{Failures of two implementations of AMG.}
  \label{tab:plot_compar_pyamg_and_amgx}
  \vskip -5pt
  \begin{tabular}{rrr}
    \toprule
    & PyAMG & AmgX \\
    \midrule
    Construction failure & 62 (7.15\%) & 105 (12.11\%) \\
    Solution failure & 5 (0.58\%) & 36 (4.15\%) \\
    \bottomrule
  \end{tabular}
\end{table}

Despite a lack of robustness, the construction of AmgX is much faster than that of PyAMG. For AmgX, the distribution of the preconditioner construction time has a minimum 0.006s, median 0.017s, and maximum 4.895s. In contrast, the distribution for PyAMG has a minimum 0.005s, median 0.286, and maximum 8505.1s. Nevertheless, there are still 16\% of the cases where the PyAMG preconditioner is faster to construct than the AmgX preconditioner.

We skip the comparison of the solution time. This is because in AmgX, the Krylov solver and the preconditioner are bundled inside a C complementation. A fair comparison requires isolating the preconditioner from the solver, because our benchmarking framework implements the solver in Python. However, the isolation is beyond scope due to the complexity of the implementation, such as the use of data structures and cuda handling. Nevertheless, we expect that the solution time of AmgX is faster than that of PyAMG, based on the observations made for preconditioner construction.

\subsection{Changing the Default PyAMG Solver}\label{app:air}
The blackbox solver of PyAMG, \texttt{pyamg.blackbox.solver}, uses the classical-style smoothed aggregation method \texttt{pyamg.aggregation.smoothed\_aggregation\_solver} (SA). For nonsymmetric matrices, however, the AIR method works better~\citep{Manteuffel2019,Manteuffel2018}. In this experiment, we modify the blackbox solver by calling \texttt{pyamg.classical.air\_solver} for nonsymmetric matrices.

We first tried using the default options of AIR, but encountered ``zero diagonal encountered in Jacobi'' errors. These errors were caused by the use of \texttt{jc\_jacobi} as the post-smoother. Then, we replaced the smoothers with the default ones used by SA (\texttt{gauss\_seidel\_nr}). In this case, the solver hung on the problem \texttt{FIDAP/ex40}. Before this, 105 problems were solved, among which 83 were nonsymmetric. The results of these 83 systems are summarized in Table~\ref{tab:plot_compare_pyamg_with_air}, suggesting that AIR is indeed better but not always.

\begin{table}[h]
  \centering
  \caption{Counts of problems when comparing AIR with SA for nonsymmetric matrices.}
  \label{tab:plot_compare_pyamg_with_air}
  \vskip -5pt
  \begin{tabular}{rrrr}
    \toprule
    & AIR is is better than SA & AIR is worse & The two are equal \\
    \midrule
    By iteration count & 14 & 8  & 61 \\
    By final residual  & 50 & 29 & 4  \\
    \bottomrule
  \end{tabular}
\end{table}

Overall, we can conclude that AIR has not exhibited its full potential for nonsymmetric matrices. We speculate that a robust implementation of AIR is challenging and that might be the reason why the authors did not use it for the blackbox solver in the first place.

\subsection{Convergence Histories}\label{app:convergence.time}
In Figure~\ref{fig:example_history_more1}, we show the convergence histories of the systems appearing in Figure~\ref{fig:example_history}, with respect to both iteration count and time. We also plot the training curves. The short solution time of GNP is notable, especially when compared with the solution time of GMRES as a preconditioner. The training loss generally stays on the level of \texttt{1e-2} to \texttt{1e-3}. For some problems (e.g., \texttt{VanVelzen\_std1\_Jac3}), training does not seem to progress well, but the training loss is already at a satisfactory point in the first place, and hence the preconditioner can be rather effective. We speculate that this behavior is caused by the favorable architectural inductive bias in the GNN.

\begin{figure}[t]
  \centering
  \includegraphics[width=\linewidth]{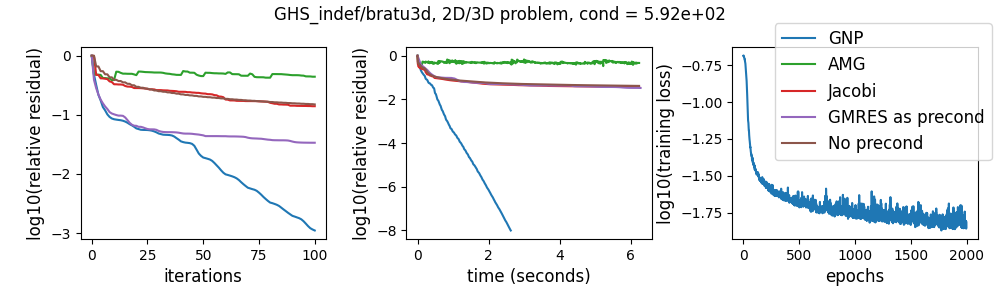}
  \includegraphics[width=\linewidth]{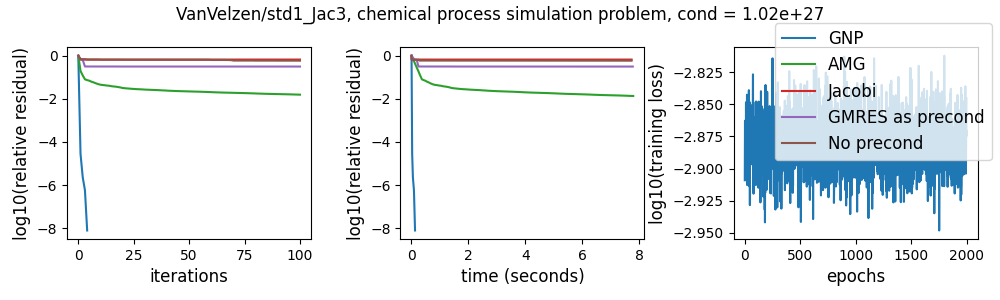}
  \includegraphics[width=\linewidth]{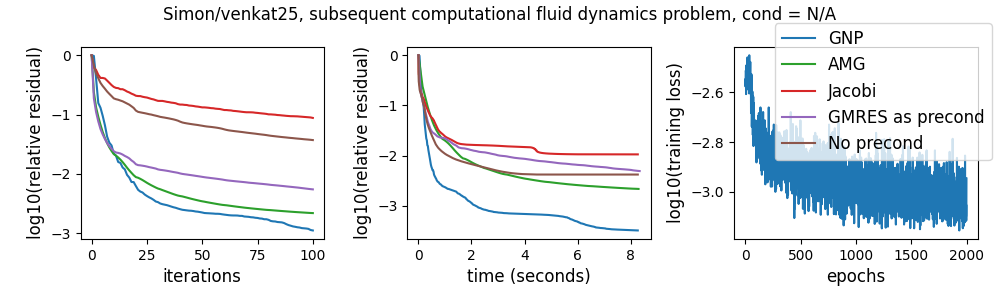}
  \includegraphics[width=\linewidth]{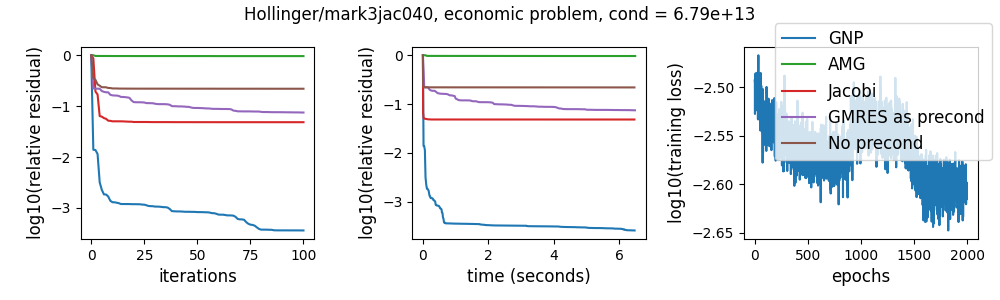}
  \vskip -10pt
  \caption{Convergence of the linear system solutions and training history of the preconditioners, for the matrices shown in Figure~\ref{fig:example_history}.}
  \label{fig:example_history_more1}
\end{figure}

To provide further examples, we show the convergence histories of all systems in the application areas identified by Figure~\ref{fig:precond_size_cond_kind_stacked_bar_cnt} to favor GNP: chemical process simulation problems (Figures~\ref{fig:example_history_chemical_0}--\ref{fig:example_history_chemical_2}), economic problems (Figures~\ref{fig:example_history_economic_0}--\ref{fig:example_history_economic_2}), and eigenvalue/model reduction problems (Figures~\ref{fig:example_history_eigen_0}--\ref{fig:example_history_eigen_1}). In many occasions, when GNP performs the best, it outperforms the second best substantially. It can even decrease the relative residual norm under \texttt{rtol} while other preconditioners barely solve the system.

\begin{figure}[t]
  \centering
  \includegraphics[width=\linewidth]{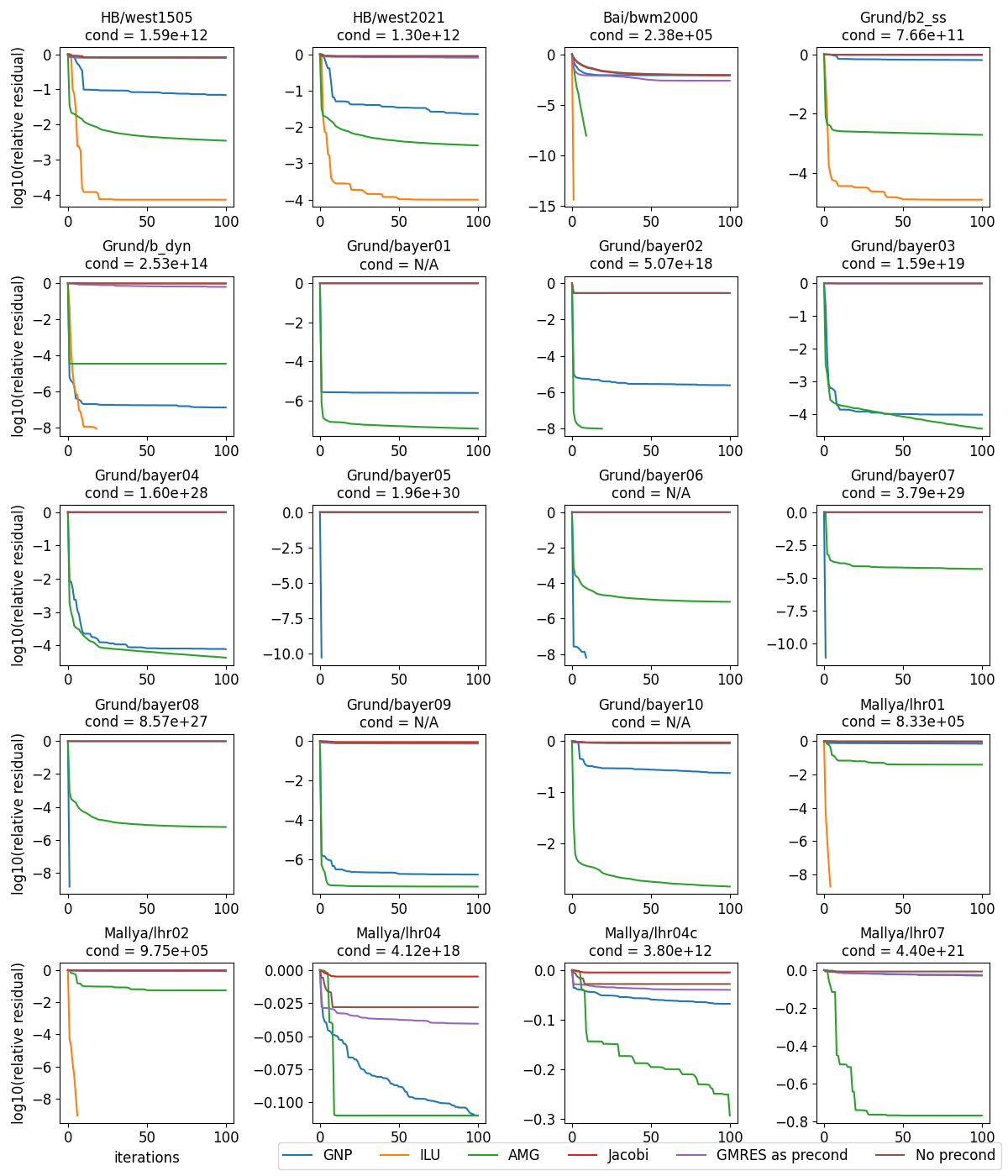}
  \caption{Convergence of the linear system solutions for chemical process simulation problems (1/3).}
  \label{fig:example_history_chemical_0}
\end{figure}

\begin{figure}[t]
  \centering
  \includegraphics[width=\linewidth]{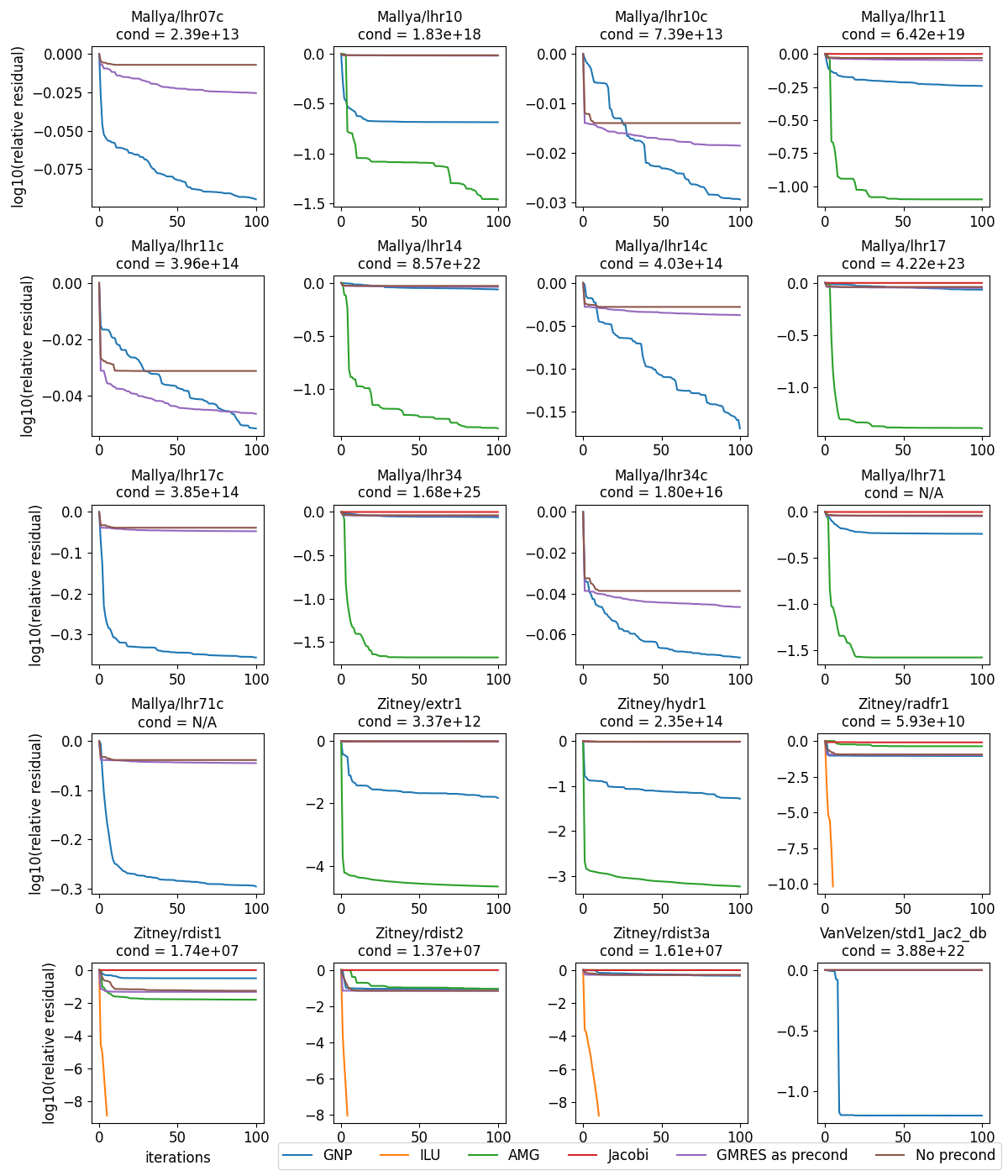}
  \caption{Convergence of the linear system solutions for chemical process simulation problems (2/3).}
  \label{fig:example_history_chemical_1}
\end{figure}

\begin{figure}[t]
  \centering
  \includegraphics[width=\linewidth]{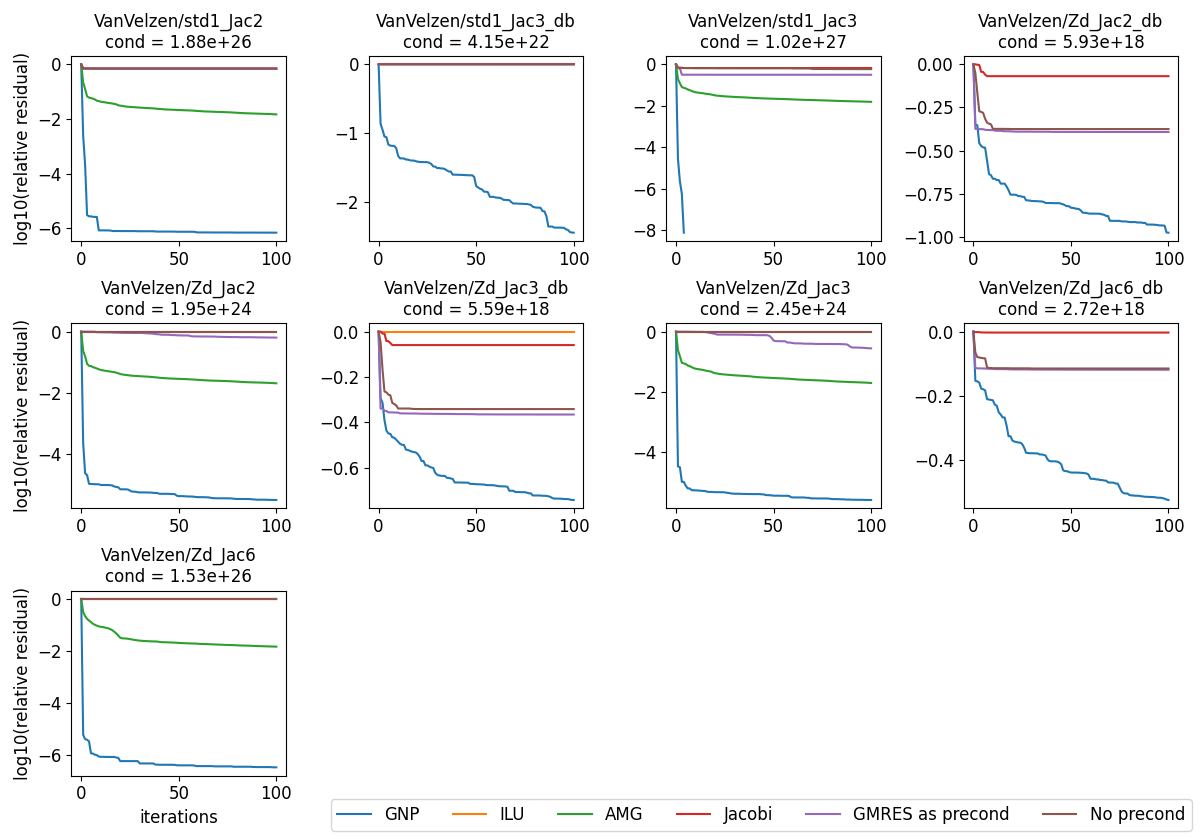}
  \caption{Convergence of the linear system solutions for chemical process simulation problems (3/3).}
  \label{fig:example_history_chemical_2}
\end{figure}

\begin{figure}[t]
  \centering
  \includegraphics[width=\linewidth]{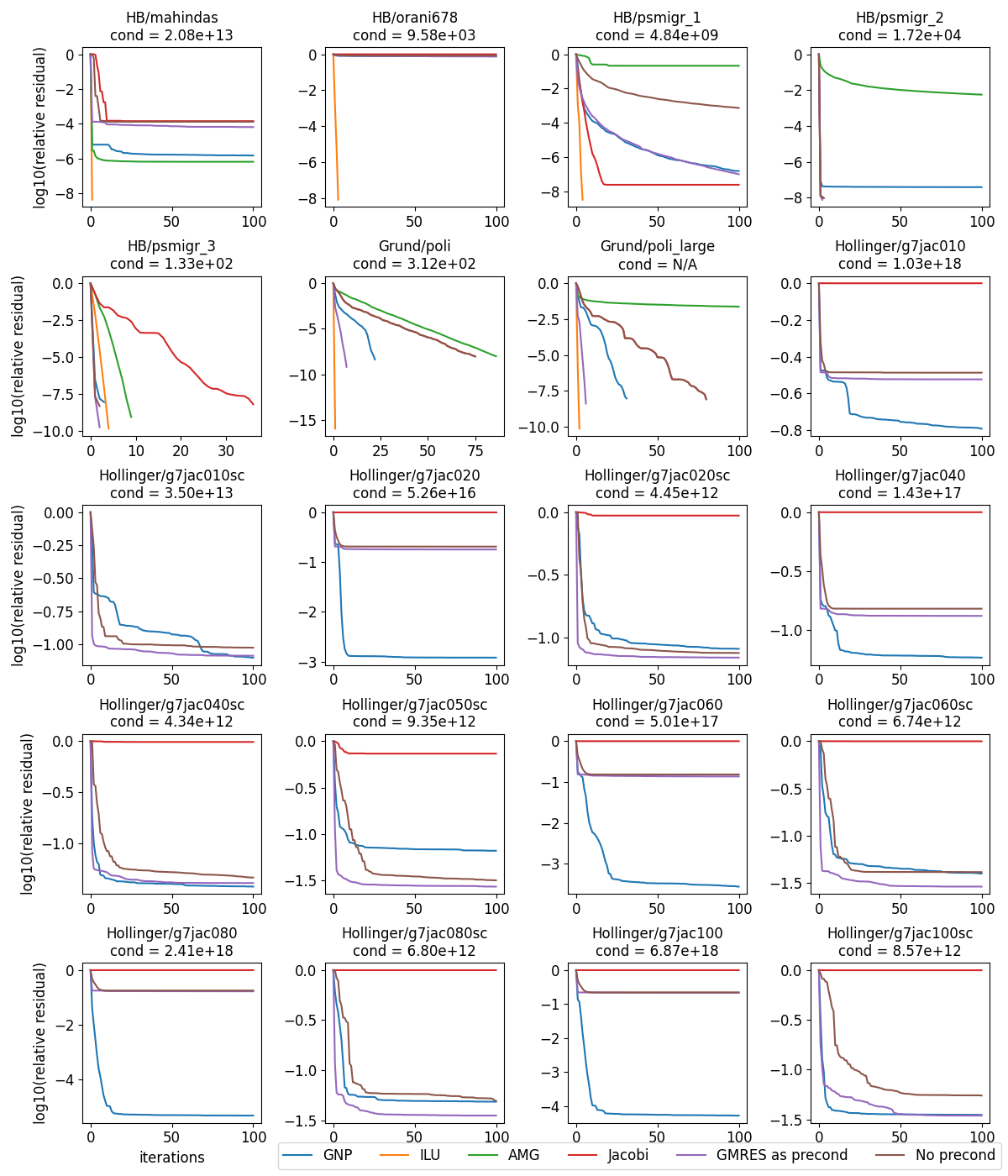}
  \caption{Convergence of the linear system solutions for economic problems (1/3).}
  \label{fig:example_history_economic_0}
\end{figure}

\begin{figure}[t]
  \centering
  \includegraphics[width=\linewidth]{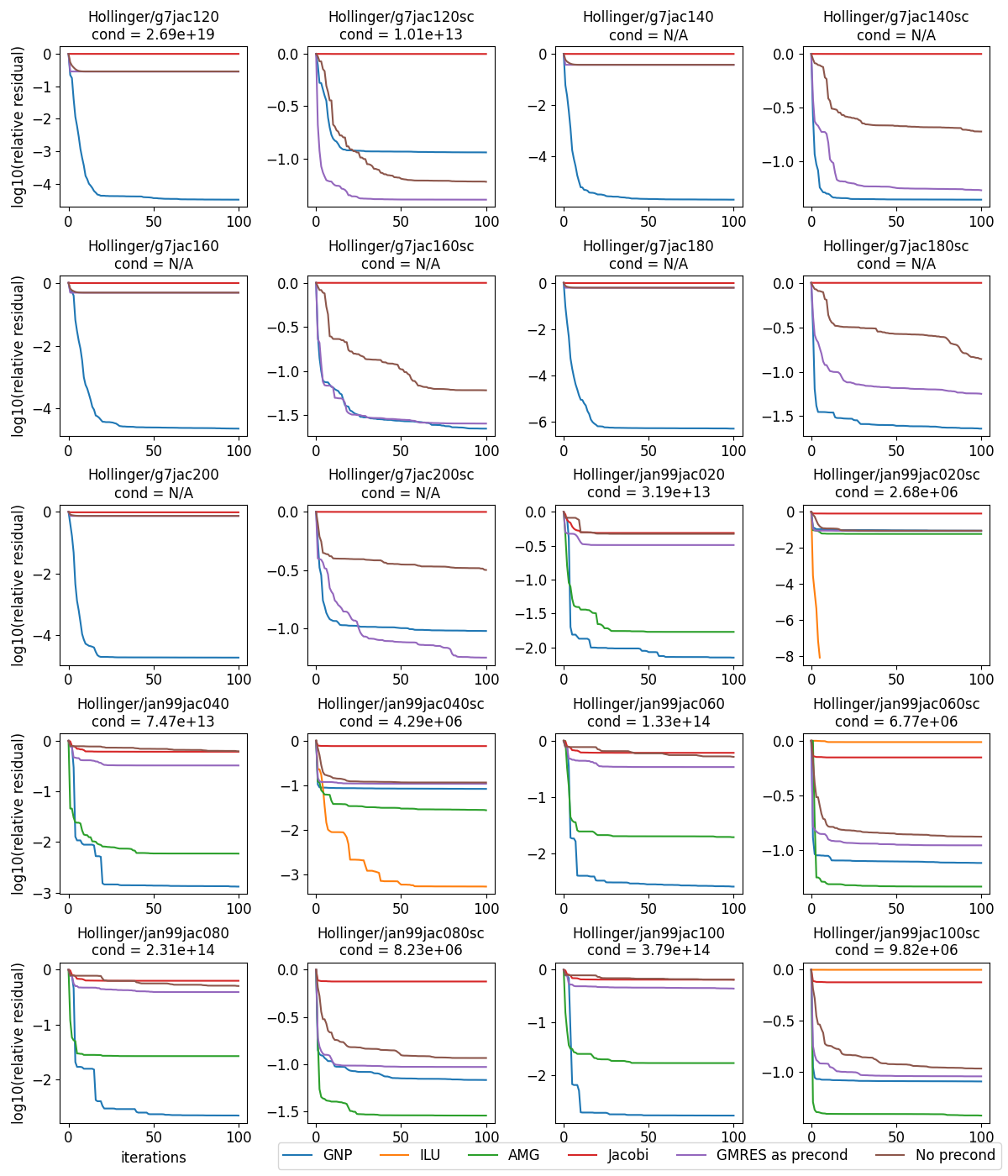}
  \caption{Convergence of the linear system solutions for economic problems (2/3).}
  \label{fig:example_history_economic_1}
\end{figure}

\begin{figure}[t]
  \centering
  \includegraphics[width=\linewidth]{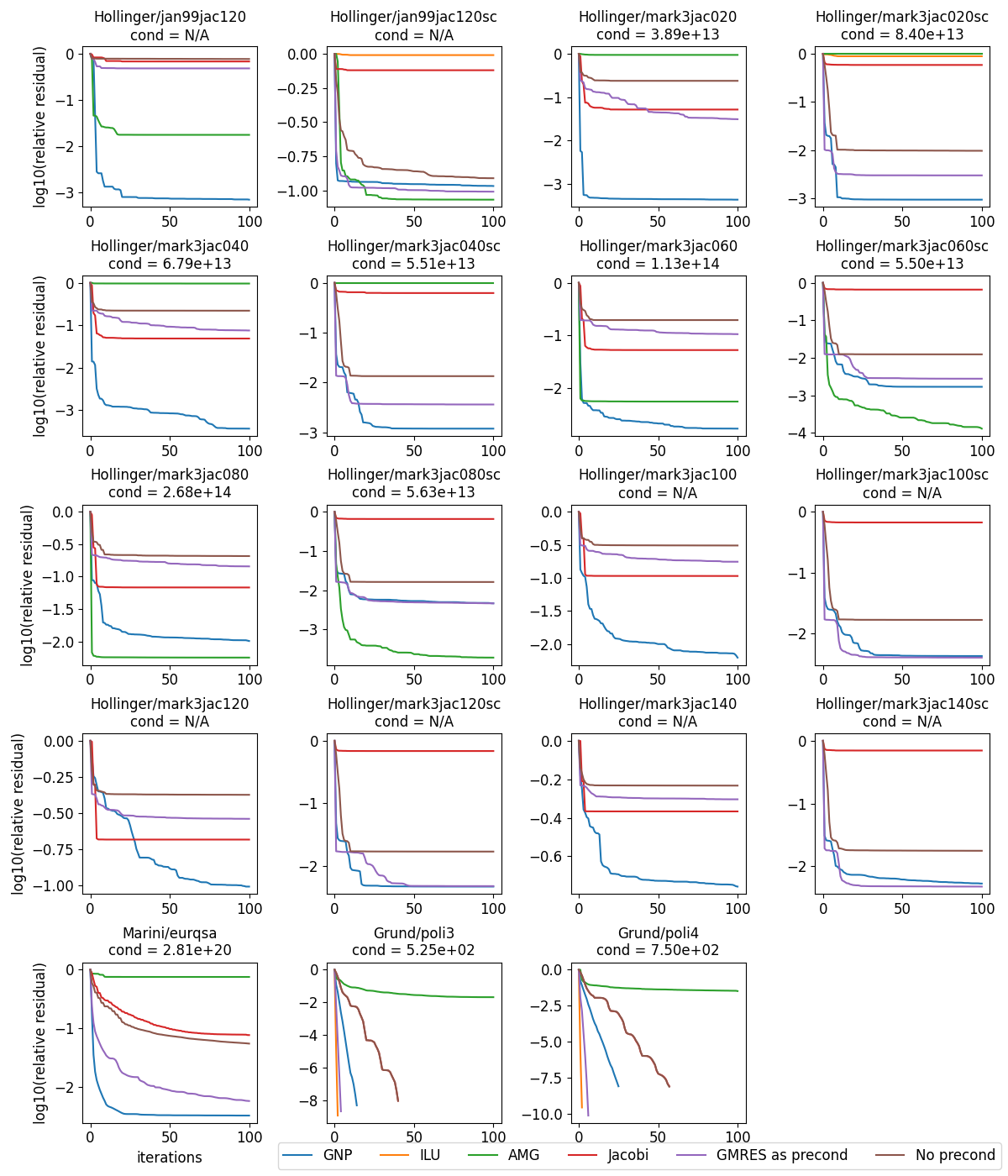}
  \caption{Convergence of the linear system solutions for economic problems (3/3).}
  \label{fig:example_history_economic_2}
\end{figure}

\begin{figure}[t]
  \centering
  \includegraphics[width=\linewidth]{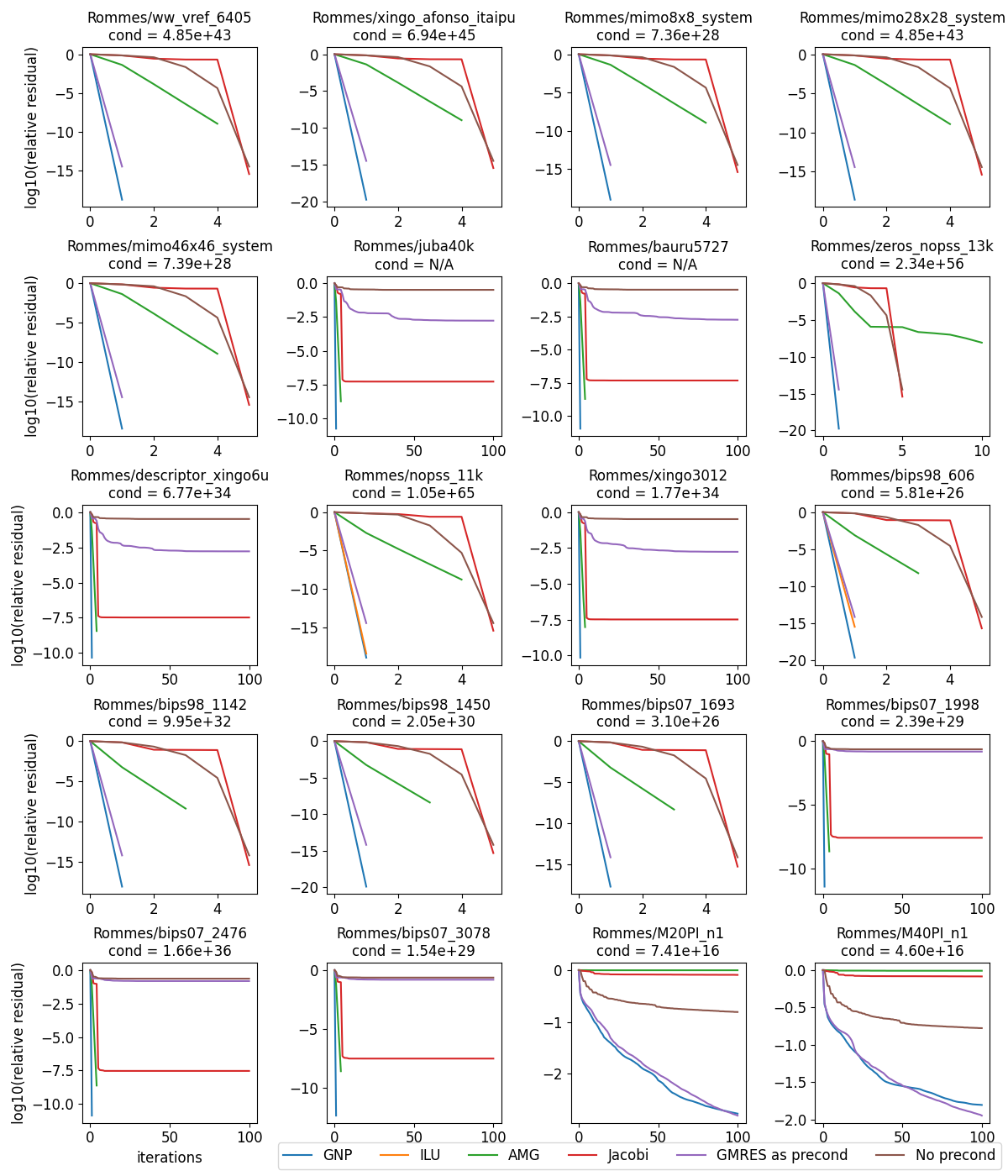}
  \caption{Convergence of the linear system solutions for eigenvalue/model reduction problems (1/2).}
  \label{fig:example_history_eigen_0}
\end{figure}

\begin{figure}[t]
  \centering
  \includegraphics[width=\linewidth]{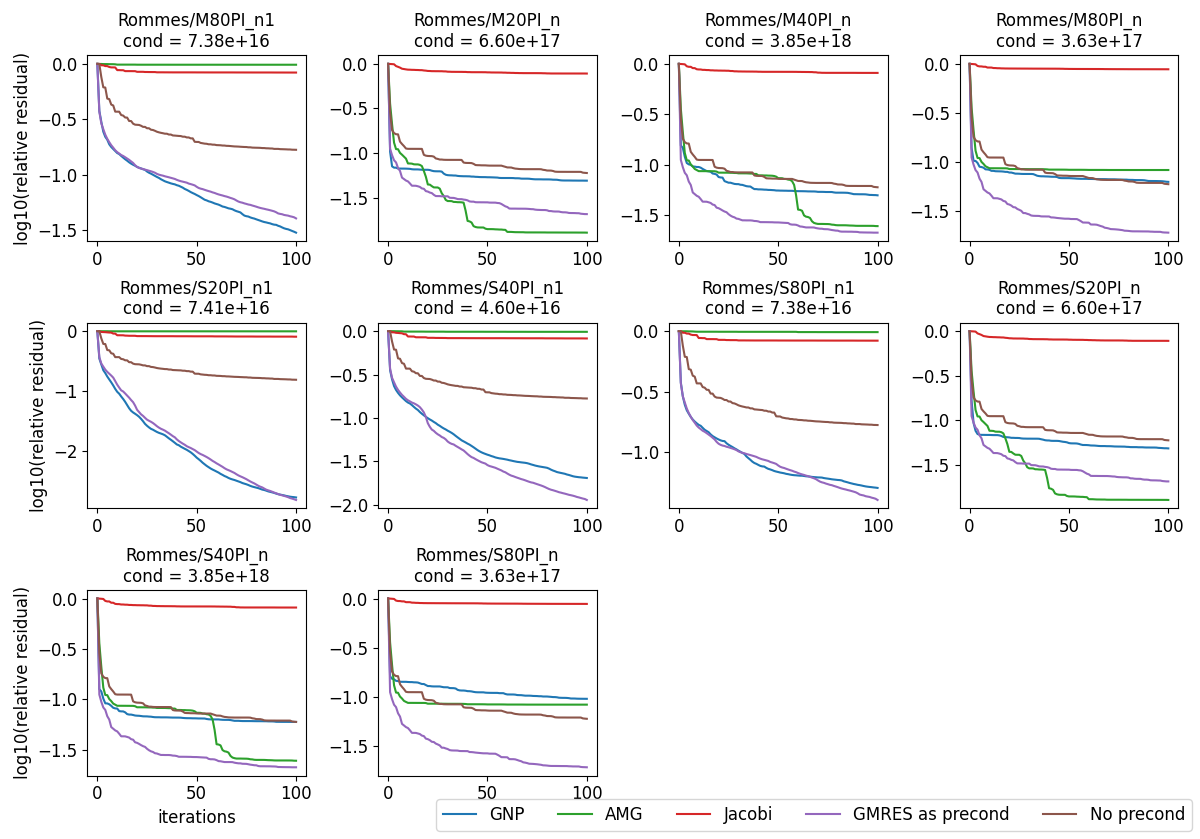}
  \caption{Convergence of the linear system solutions for eigenvalue/model reduction problems (2/2).}
  \label{fig:example_history_eigen_1}
\end{figure}

\end{document}